\documentclass[11pt,reqno]{amsart}
\usepackage{amsmath,amssymb}
\usepackage[leqno]{amsmath}
\usepackage{agor}
\usepackage{color}
\usepackage{rotating}
\usepackage{lscape}
\usepackage{tablefootnote}
\usepackage[square,numbers,sort]{natbib}
\usepackage{todonotes}
\newcommand{\MIQO}{\text{MIQO}}
\newcommand{\NP}{\ensuremath{\mathcal NP}}
\newcommand{\revised}[1]{#1}

\newcommand{\leqnomode}{\tagsleft@true\let\veqno\@@leqno}
\newcommand{\reqnomode}{\tagsleft@false\let\veqno\@@eqno}

\def\SingleSpacedXI{\linespread{1.05}}
\SingleSpacedXI

\title[Convex hull of quadratics with indicators]{On the convex hull of convex quadratic optimization problems with indicators}

	\author{Linchuan Wei, Alper Atamt\"urk, Andr\'es G\'omez, Simge K\"u\c{c}\"ukyavuz}

\thanks{ \noindent \hskip -5mm
L. Wei: {Department of Industrial Engineering and Management Sciences, Northwestern University, \texttt{linchuanwei2022@u.northwestern.edu}.}\\
A. Atamt\"urk: {Department of Industrial Engineering and Operations Research, University of California Berkeley, \texttt{atamturk@berkeley.edu}.} \\
A. G\'omez: {Department of Industrial and System Engineering, University of Southern California, \texttt{gomezand@usc.edu}.}, \\
S. K\"u\c{c}\"ukyavuz: {Department of Industrial Engineering and Management Sciences, Northwestern University, \texttt{simge@northwestern.edu}.}\\
}
\RequirePackage[normalem]{ulem} 
\RequirePackage{color}\definecolor{RED}{rgb}{1,0,0}\definecolor{BLUE}{rgb}{0,0,1} 
\providecommand{\DIFaddbegin}{} 
\providecommand{\DIFaddend}{} 
\providecommand{\DIFdelbegin}{} 
\providecommand{\DIFdelend}{} 

\begin{document}

\begin{abstract} 
	We consider the convex quadratic optimization problem with indicator variables and arbitrary constraints on the indicators. We show that a convex hull description of the associated mixed-integer set in an extended space with a quadratic number of additional variables consists of a single positive semidefinite constraint (explicitly stated) and linear constraints. In particular, convexification of this class of problems reduces to describing a polyhedral set in an extended formulation. \revised{While the vertex representation of this polyhedral set is exponential and an explicit linear inequality description may not be readily available in general, we derive a compact mixed-integer linear formulation whose solutions coincide with the vertices of the polyhedral set.} We also give descriptions in the original space of variables: we provide a description based on an infinite number of conic-quadratic inequalities, which are ``finitely generated." In particular, it is possible to characterize whether a given inequality is necessary to describe the convex hull. The new theory presented here 
	unifies several previously established results, and paves the way toward utilizing polyhedral methods to analyze the convex hull of mixed-integer nonlinear sets.
\end{abstract}

\maketitle

\begin{center}
	{December 2021; September 2022} \\
\end{center}

\section{Introduction}
Given a symmetric positive semidefinite matrix $Q\in \R^{n\times n}$, vectors $a,b\in \R^n$ and set $Z\subseteq \{0,1\}^n$, consider the mixed-integer quadratic optimization (\MIQO) problem with indicator variables
\begin{subequations}\label{eq:miqo}
\begin{align}
\min\;&a^\top x+b^\top z+\tfrac{1}{2}t\\
(\MIQO)\qquad\text{s.t.}\;&x^\top Q x\leq t\label{eq:miqo_quad}\\
&x_i(1-z_i)=0, \ i=1,\dots,n\\
&x\in \R^n,\; z\in Z,\; t\in \R,
\end{align}
\end{subequations}
and the associated mixed-integer nonlinear set 
$$X=\left\{(x,z,t)\in \R^n\times Z\times \R: t\geq x^\top Qx,\; x\circ (\ones-z)=0\right\},$$
where $\ones$ denotes a vector of ones, and $x\circ (\ones-z)$ is the Hadamard product of vectors $x$ and $\ones-z$. There has recently been an increasing interest in problem \eqref{eq:miqo} due to its statistical applications: the nonlinear term \eqref{eq:miqo_quad} is used to model a quadratic loss function, as in regression, while $Z$ represents logical conditions on the support of the variables $x$. For example, given model matrix $F \in \R^{m\times n}$ 
 and responses $\beta\in \R^m$, setting $a=-\beta^\top F$, $Q=F^\top F$, $b=0$ and $Z=\left\{z\in \{0,1\}^n:\sum_{i=1}^nz_i\leq r\right\}$ \revised{in \eqref{eq:miqo} is equivalent to} the best subset selection problem with a given cardinality $r$ \cite{bertsimas2015or,cozad2014learning}:
 \revised{
 \begin{equation}\label{eqn:bestsubset}
   \min_{x, z} \; \|\beta - F x\|_2^{2} \quad \text{s.t.} \quad x\circ (\ones-z)=0, \; \sum_{i=1}^{n} z_i \leq r.   
 \end{equation}
}
Other constraints defining $Z$ that have been considered in statistical learning applications include multicollinearity  \cite{bertsimas2015or}, cycle prevention \cite{MKS21, KSMW20}, and hierarchy \cite{bien2013lasso}.
Set $X$ arises as a substructure in many other applications, including portfolio optimization \cite{B:miqp}, 
 optimal control \cite{Gao2011}, image segmentation  \cite{hochbaum2001efficient}, signal denoising \cite{bach2019submodular}.

A critical step toward solving \MIQO\ effectively is to convexify the set $X$.
Indeed, the mixed-integer optimization problem \eqref{eq:miqo} is equivalent to the convex optimization problem 
$$\min_{x,z,t} \bigg \{ a^\top x+b^\top z+\tfrac{1}{2}t \ : \ (x,z,t)\in \clconv(X) \bigg \},$$
where $\conv(X)$ denotes the convex hull of $X$ and $\clconv(X)$ is the closure of $\conv(X)$. 
However, problem \MIQO\  
is  \NP-hard even if $Z=\{0,1\}^n$ \cite{chen2014complexity}.  Thus, a simple description of $\clconv(X)$ is, in general, 
not possible unless \NP = Co-\NP. 

In practice, one aims to obtain a good convex relaxation of $X$, which can then be used  either as a standalone method (as is pervasively done in the machine learning literature),  to obtain high quality solutions via rounding, or  in a branch-and-bound framework. Nonetheless, it is unclear how to determine whether a given relaxation is good or not. In mixed-integer \emph{linear} optimization, it is well-understood that facet-defining inequalities give strong relaxations. However, in \MIQO\ (and, more generally, in mixed-integer nonlinear optimization problems), $\clconv(X)$ is not a polyhedron and there is no consensus on how to design good convex relaxations, or even what a good relaxation should be. 

An important class of convex relaxations of $X$ that has received attention in the literature is obtained by decomposing matrix $Q=\sum_{i=1}^\ell \Gamma_i+R$, where $\Gamma_i\succeq 0$, $i=1,\dots,\ell$, are assumed to be ``simple" and $R\succeq 0$. Then
\begin{equation}\label{eq:decomp}
t\geq x^\top Qx\Longleftrightarrow \revised{t \ge }\sum_{i=1}^\ell \tau_i+x^\top Rx\text{, and } \tau_i\geq x^\top \Gamma_ix,\; \forall i\in \{1,\dots,\ell\}, 
\end{equation} 
and each constraint $\tau_i\geq x^\top \Gamma_i x$ is replaced with a system of inequalities describing the convex hull of the associated ``simple" mixed-integer set. This idea was originally used in \cite{Frangioni2007}, where $\ell=n$, $(\Gamma_i)_{ii}=d_i>0$ and $(\Gamma_i)_{jk}=0$ otherwise, and constraints $\tau_i\geq d_i x_i^2$ are strengthened using the perspective relaxation \cite{Frangioni2006,akturk2009strong,Gunluk2010}, i.e., reformulated as $z_i\tau_i\geq d_i x_i^2$. Similar relaxations based on separable quadratic terms were considered in \cite{dong2015regularization,zheng2014improving}. A generalization of the above approach is \revised{rank-one decomposition}, which lets $\Gamma_i=h_ih_i^\top$ be a rank-one matrix \cite{atamturk2019rank,atamturk2020supermodularity,wei2020convexification,wei2021ideal}; in this case, letting $S_i=\left\{i\in [n]:h_i\neq 0\right\}$, constraints $\left(\sum_{j\in S_i}z_j\right)\tau_i\geq  (h_i^\top x)^2$ can be added to the formulation. 
Alternative generalizations of perspective relaxation that have been considered in the literature 
include exploiting substructures based on  $\Gamma_i$ where non-zeros are $2\times 2$ matrices
 \cite{Jeon2017,anstreicher2021quadratic,atamturk2018strong,atamturk2021sparse,frangioni2018decompositions,hga:2x2} or tridiagonal \cite{liu2021graph}.  

Convexifications based on decomposition~\eqref{eq:decomp} have proven to be strong computationally, and are attractive from a theoretical perspective. The fact that a given \revised{formulation} is ideal for the substructure $\tau_i \ge x^\top \Gamma_ix$ lends some theoretical weight to the strength of the convexification. However, approaches based on decomposition \eqref{eq:decomp} have fundamental limitations as well. First, they require computing the convex hull description of a nonlinear mixed-integer set to establish (theoretically) the strength of the relaxation, a highly non-trivial task that restricts the classes of matrices $\Gamma_i$ that can be used. Second, even if \revised{the ideal formulation for} the substructure $\tau_i \ge x^\top \Gamma_i x$ \revised{is available}, \revised{the convexification based on such decomposition} can still be a poor relaxation of $X$---and there is currently no approach to establish the strength of the relaxation without numerical computations. Third, it is unclear whether the structure of the relaxations induced by \eqref{eq:decomp} matches the structure of $\clconv(X)$, or if they are overly simple or complex.

\subsubsection*{Contributions and outline} In this paper, we close the aforementioned gaps in the literature by characterizing the structure of $\clconv(X)$. First, in \S\ref{sec:preliminaries}, we review relevant background for the paper. In \S\ref{sec:extended}, we show that $\clconv(X)$ can be described in a compact extended formulation with $\mathcal{O}(n^2)$ additional variables with linear constraints and a single positive semidefiniteness constraint. In particular, convexification of $X$ in this extended formulation reduces to describing a \revised{\emph{base}} polytope. \revised{We use the vertex description of this base polytope, which is exponential in general. However, we show that the set of vertices can be represented as the feasible points of a compact mixed-integer linear formulation (\S\ref{sec:MILP})}.  In \S\ref{sec:original}, we characterize $\clconv(X)$ in the original space of variables. While the resulting description has an infinite number of conic quadratic constraints,
we show that $\clconv(X)$ is \emph{finitely generated}, and thus we establish which inequalities 
are necessary to describe $\clconv(X)$---in precisely the same manner that facet-defining inequalities are required to describe a polyhedron. We also establish a relationship between $\clconv(X)$ and relaxations obtained from decompositions \eqref{eq:decomp}. \revised{In \S\ref{sec:MILP}, we present a mixed-integer \textit{linear} formulation of the \MIQO\  problem using the theoretical results in \S\ref{sec:extended}}. Finally, in \S\ref{sec:conclusion} we conclude the paper with a few  remarks.

 \revised{
We point out that, using standard disjunctive programming techniques \cite{CS:dis-conv}, it is possible to obtain a conic quadratic extended formulation of \eqref{eq:miqo}, although such representation typically requires adding $\mathcal{O}(|Z|n)$ number of variables and $\mathcal{O}(|Z|)$ \emph{nonlinear} constraints. Since $|Z|$ is often exponential in $n$, these formulations are in general impractical, and therefore their use has been restricted to small instances with $n\leq 2$ \cite{Gunluk2010,atamturk2018strong,anstreicher2021quadratic,frangioni2018decompositions,hga:2x2} or problems with special structures that admit a compact representation \cite{han2021compact}. We argue that the convexifications in this paper are significantly more tractable: regardless of $Z$, we require only $\mathcal{O}(n^2)$ variables instead of $\mathcal{O}(|Z|n)$, and only \emph{one} nonlinear conic constraint instead of $\mathcal{O}(|Z|)$. The major complexity of the proposed formulations in this paper is the exponential number of \emph{linear} inequalities, which can be generated, as needed, using mature mixed-integer linear optimization techniques.
 }

\DIFdelend
\section{Notation and Preliminaries}\label{sec:preliminaries}

In this section, we first review the relevant background and introduce the notation used in the paper.

\begin{definition}[\cite{penrose1955generalized}]\label{def:pseudoinverse} Given a matrix $W\in \R^{p\times q}$, its pseudoinverse $W^\dagger\in \R^{q\times p}$ is the unique matrix satisfying the four properties:
\[ 	WW^\dagger W =W, \ \
W^\dagger WW^\dagger =W^\dagger,\ \
	(WW^\dagger)^\top =(WW^\dagger),\ \
	(W^\dagger W)^\top =W^\dagger W.
\]
\end{definition}
Clearly, if $W$ is invertible, then $W^{-1}=W^\dagger$. It also readily follows from the definition that $(W^\dagger)^\dagger=W$.

We recall the generalized Schur complement, relating pseudoinverses and positive semidefinite matrices. 
\begin{lemma}[\cite{albert1969conditions}]\label{lem:schur}
Let $W=\small \begin{pmatrix}W_{11}&W_{12}\\
W_{12}^\top & W_{22}\end{pmatrix}$, with symmetric $W_{11}\in \R^{p\times p}$, symmetric $W_{22}\in \R^{q\times q}$, and $W_{12}\in \R^{p\times q}$. Then $W\succeq 0$ if and only if $W_{11}\succeq 0$, $W_{11}W_{11}^\dagger W_{12}=W_{12}$ and $W_{22}-W_{12}^\top W_{11}^\dagger W_{12}\succeq 0$.  
\end{lemma}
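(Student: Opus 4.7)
The plan is to reduce the claim to the classical Schur complement identity via a congruence transformation that block-diagonalizes $W$. Let $C:=W_{11}^\dagger W_{12}$ and introduce the block-triangular matrix
\[
M \;:=\; \begin{pmatrix} I_p & -C \\ 0 & I_q \end{pmatrix},
\]
which is invertible, so $W\succeq 0$ if and only if $M^\top W M\succeq 0$. A direct computation, using only the defining properties of the pseudoinverse (Definition~\ref{def:pseudoinverse}) and the symmetry of $W_{11}$ (which forces $W_{11}^\dagger$ to be symmetric as well), yields
\[
M^\top W M \;=\; \begin{pmatrix} W_{11} & W_{12}-W_{11}W_{11}^\dagger W_{12} \\ W_{12}^\top - W_{12}^\top W_{11}^\dagger W_{11} & W_{22}-W_{12}^\top W_{11}^\dagger W_{12} \end{pmatrix}.
\]
The off-diagonal blocks vanish precisely when the range condition $W_{11}W_{11}^\dagger W_{12}=W_{12}$ holds (the $(2,1)$ block is its transpose, by the symmetry of $W_{11}^\dagger W_{11}$); in that case $M^\top W M$ is block-diagonal, and $M^\top W M\succeq 0$ reduces to $W_{11}\succeq 0$ together with $W_{22}-W_{12}^\top W_{11}^\dagger W_{12}\succeq 0$, exactly the right-hand side of the stated equivalence.

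For the $(\Rightarrow)$ direction I still need to argue that $W\succeq 0$ already implies $W_{11}\succeq 0$ and the range condition, since both were invoked in the block-diagonalization. Evaluating the quadratic form $w^\top W w$ on $w=(u,0)$ with $u\in\R^p$ immediately gives $W_{11}\succeq 0$. For the range condition, fix any $v\in\ker W_{11}$ and any coordinate $j\in\{1,\dots,q\}$; evaluating the quadratic form on $(v,te_j)$ produces a quadratic in $t$ whose constant term vanishes and whose linear coefficient is $2(W_{12}^\top v)_j$. Non-negativity for all $t\in\R$ forces $(W_{12}^\top v)_j=0$ for every $j$, so $W_{12}^\top v=0$. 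Hence the column space of $W_{12}$ is contained in $(\ker W_{11})^\perp=\mathrm{range}(W_{11})$, and because $W_{11}W_{11}^\dagger$ is the orthogonal projector onto $\mathrm{range}(W_{11})$, this yields $W_{11}W_{11}^\dagger W_{12}=W_{12}$.

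The $(\Leftarrow)$ direction then follows by reversing the congruence: the three hypotheses force $M^\top W M$ to be block-diagonal with positive semidefinite diagonal blocks, so $W=(M^{-1})^\top(M^\top W M)(M^{-1})\succeq 0$. The main technical obstacle is the bookkeeping with pseudoinverses; the key auxiliary identity is $W_{12}^\top W_{11}^\dagger W_{11}=(W_{11}W_{11}^\dagger W_{12})^\top$, which relies on the symmetry of both $W_{11}$ and $W_{11}^\dagger$ and is what makes the $(2,1)$ block of $M^\top W M$ collapse correctly. Once this identity is established, the argument parallels the standard block-triangular factorization proof of the classical Schur complement, with $W_{11}^\dagger$ playing the role of $W_{11}^{-1}$ and the range condition taking the place of the invertibility of $W_{11}$.
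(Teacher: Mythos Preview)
The paper does not supply its own proof of this lemma; it is quoted as a known result from \cite{albert1969conditions} and used as a tool throughout. So there is no paper proof to compare against, only the correctness of your argument to assess.

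Your proof is correct and is essentially the standard congruence argument for the generalized Schur complement. The computation of $M^\top W M$ is right (using $W_{11}^\dagger W_{11}W_{11}^\dagger=W_{11}^\dagger$ for the $(2,2)$ block), and your handling of the $(\Rightarrow)$ direction is clean: the restriction to $(u,0)$ gives $W_{11}\succeq 0$, and the quadratic-in-$t$ argument on $(v,te_j)$ with $v\in\ker W_{11}$ correctly forces $W_{12}^\top v=0$, hence $\mathrm{col}(W_{12})\subseteq(\ker W_{11})^\perp=\mathrm{range}(W_{11})$ and the range condition follows because $W_{11}W_{11}^\dagger$ is the orthogonal projector onto $\mathrm{range}(W_{11})$. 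One small remark on bookkeeping: when you say the $(2,1)$ block is the transpose of the $(1,2)$ block ``by the symmetry of $W_{11}^\dagger W_{11}$,'' what you actually need is the identity $W_{11}^\dagger W_{11}=W_{11}W_{11}^\dagger$, which holds here because $W_{11}$ (and hence $W_{11}^\dagger$) is symmetric; you implicitly use this, and it is worth stating explicitly since for general matrices the two projectors differ.
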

Note that if $W_{11}\succ 0$, then the second condition of  Lemma \ref{lem:schur} is automatically satisfied. Otherwise, this condition is equivalent to the system of equalities $W_{11}U=W_{12}$ having a solution $U\in \R^{p\times q}$.

Let $[n]=\{1,\dots,n\}$. Throughout, we use the convention that $x_i^2/z_i=0$ if $x_i=z_i=0$ and $x_i^2/z_i=+\infty$ if $z_i=0$ and $x_i\neq 0, i\in[n]$.  Given two matrices $V,W$ of matching dimensions, let $\langle V,W\rangle=\sum_{i}\sum_jV_{ij}W_{ij}$ denote the usual inner product. Given a matrix $W\in \R^{n \times n}$, let $\text{Tr}(W)=\sum_{i=1}^nW_{ii}$ denote its trace, and let $W^{-1}$ denote its inverse, if it exists. \revised{$\|W\|_2$ and $\|W\|_{\infty}$ denote the \textit{Frobenius} norm and the maximum absolute value of entries of $W$ respectively, and $\lambda_{\max}(W)$ means the maximum eigenvalue of $W$. }We let $\mathrm{col}(W)$  denote the column  space of matrix $W$.
Given a matrix $W\in \R^{n\times n}$ and $S\subseteq [n]$, let $W_S\in \R^{S\times S}$ be the submatrix of $W$ induced by $S$, and let $\hat W_S\in \R^{n\times n}$ be the $n\times n$ matrix obtained from $W_S$ by filling the missing entries with zeros, i.e.,  matrices  subscripted by $S$ without ``hat" refer to the lower-dimensional submatrices. \revised{For any two sets $S, T \subset [n]$, let $W_{S,T}$ denote the submatrix of $W$ with rows in $S$ and columns in $T$.} Note that if matrix $W\succ 0$, then it can be easily be verified from Definition~\ref{def:pseudoinverse} that the submatrix of $\hat W_S^{\dagger}$ indexed by $S$ coincides with $W_S^{-1}$, and $\hat W_S^{\dagger}$ is zero elsewhere; in this case, we abuse notation and write $\hat W_S^{-1}$ instead of $\hat W_S^{\dagger}$.  Given $S\subseteq [n]$, let $\hat \ones_S\in \{0,1\}^n$  
be the indicator vector of $S$. We define $\pi_S$ as the projection onto the subspace indexed by $S$ and $\pi_{S}^{-1}(x)$ as the preimage of $x$ under $\pi_S$.

\begin{example}
	Let $Q=\begin{pmatrix} d_1& b\\
		b &d_2
	\end{pmatrix}$ with $d_1,d_2>0$ and $d_1d_2>b^2$. Then
	\begin{align*}
		&\hat Q_\emptyset^{-1}=\begin{pmatrix} 0& 0\\
			0 &0
		\end{pmatrix},\;\hat Q_{\{1\}}^{-1}=\begin{pmatrix} 1/d_1& 0\\
			0 &0
		\end{pmatrix},\;
		\hat Q_{\{2\}}^{-1}=\begin{pmatrix} 0& 0\\
			0 &1/d_2
		\end{pmatrix},\text{ and }\\&Q_{\{1,2\}}^{-1}=\frac{1}{d_1d_2-b^2}\begin{pmatrix} d_2& -b\\
			-b &d_1
		\end{pmatrix}.
	\end{align*}
\end{example}

\section{Convexification in an extended space}\label{sec:extended}

In this section, we describe $\clconv(X)$ in an extended space. In \S\ref{sec:canonical}, we provide a ``canonical" representation of $\clconv(X)$ under the assumption that $Q\succ 0$. In \S\ref{sec:separable}, we provide alternative representations of $\clconv(X)$, which can handle non-invertible matrices $Q$ and may \revised{also} lead to sparser formulations. 

\subsection{Canonical representation}\label{sec:canonical}

Given $Q\succ 0$, define the polytope $P\subseteq \R^{n+n^2}$ as 
$$P\defeq\conv\left(\left\{(\hat \ones_S,\hat Q_S^{-1})\right\}_{\revised{\hat \ones_{S} \in Z}}\right).$$
Proposition~\ref{prop:valid} below shows how to construct mixed-integer conic formulations of \MIQO\ using polytope $P$. 
\begin{proposition}\label{prop:valid}
	If $Q\succ 0$, then the mixed-integer optimization model
	\begin{subequations}\label{eq:miqoSDP}
		\begin{align}
		\min_{x,z,W,t}\;&a^\top x+b^\top z+\tfrac{1}{2}t\\
		\text{s.t.}\;&\begin{pmatrix}W & x\\
		x^\top&t\end{pmatrix}\succeq 0\label{eq:miqoSDP_psd}\\
		&(z,W)\in P\label{eq:miqoSDP_polytope}\\
		& z\in \{0,1\}^n\label{eq:miqoSDP_feasible}\\
		&x\in \R^n,  t\in \R  \label{eq:miqoSDP_x}
		\end{align}
	\end{subequations}
	is a valid formulation of problem \eqref{eq:miqo}. 
\end{proposition}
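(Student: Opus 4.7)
The plan is to show that for every fixed integer $z\in Z$, the $(x,t)$-projection of the feasible set of \eqref{eq:miqoSDP} equals the corresponding slice of the feasible set of \eqref{eq:miqo}; since the objective depends only on $(x,z,t)$, equivalence of the two minimization problems follows.

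First, I would characterize the matrix $W$ that can accompany an integer $z$ inside $P$. The polytope $P$ is the convex hull of finitely many points $(\hat{\ones}_S,\hat{Q}_S^{-1})$ whose $z$-coordinates are the distinct $0/1$ vectors $\hat{\ones}_S$ for $\hat{\ones}_S\in Z$. If $(z,W)\in P$ with $z\in\{0,1\}^n$, then writing $z=\sum_{S}\lambda_S\hat{\ones}_S$ as a convex combination, the integrality of $z$ forces $\lambda_{S^*}=1$ for the unique $S^*=\{i:z_i=1\}$: coordinates with $z_i=0$ yield $\lambda_S=0$ for every $S\ni i$, and coordinates with $z_i=1$ yield $\lambda_S=0$ for every $S\not\ni i$. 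Therefore $z=\hat{\ones}_{S^*}$ with $\hat{\ones}_{S^*}\in Z$, and $W=\hat{Q}_{S^*}^{-1}$.

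Next, I would unpack the PSD constraint \eqref{eq:miqoSDP_psd} with $W=\hat{Q}_S^{-1}$ via the generalized Schur complement (Lemma~\ref{lem:schur}). The key algebraic fact is $(\hat{Q}_S^{-1})^\dagger=\hat{Q}_S$, which follows at once from Definition~\ref{def:pseudoinverse} because $\hat{Q}_S^{-1}\hat{Q}_S$ is the symmetric diagonal projector onto coordinates in $S$. Lemma~\ref{lem:schur} then imposes three conditions: (i) $\hat{Q}_S^{-1}\succeq 0$, immediate from $Q_S\succ 0$; (ii) the range condition $\hat{Q}_S^{-1}\hat{Q}_S x=x$, which is exactly $x_i=0$ for $i\notin S$, i.e., the complementarity $x\circ(\ones-z)=0$; (iii) the Schur inequality $t\geq x^\top \hat{Q}_S x=x_S^\top Q_S x_S$, which under (ii) coincides with $t\geq x^\top Qx$. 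Conjoining these is exactly feasibility in \eqref{eq:miqo}, proving validity.

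The main obstacle is the identification $(\hat{Q}_S^{-1})^\dagger=\hat{Q}_S$ together with the realization that the range condition of Lemma~\ref{lem:schur}, which is vacuous when the top-left block is invertible, is here the precise mechanism by which rank-deficiency of $W$ encodes the complementarity. This is also the conceptual payoff of the formulation: a single PSD inequality, coupled with $(z,W)\in P$, simultaneously enforces the complementarity and the quadratic loss.
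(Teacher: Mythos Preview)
Your proposal is correct and follows essentially the same route as the paper: fix an integer $z$, argue that the polytope constraint forces $W=\hat Q_S^{-1}$ for $S=\{i:z_i=1\}$, and then decode the PSD constraint via the generalized Schur complement into (i) $\hat Q_S^{-1}\succeq 0$, (ii) the range condition $x_{[n]\setminus S}=0$, and (iii) $t\ge x_S^\top Q_S x_S$. Your convex-combination argument for the uniqueness of $W$ and your explicit verification that $(\hat Q_S^{-1})^\dagger=\hat Q_S$ are slightly more detailed than the paper's presentation, but the substance is identical.
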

\begin{proof}
	Consider a point $(x,z,t, W)$ \revised{satisfying constraints \eqref{eq:miqoSDP_psd}, \eqref{eq:miqoSDP_polytope}} with $z=\hat \ones_S$ for some \revised{$\hat e_{S} \in Z$}. Constraint \eqref{eq:miqoSDP_polytope} is satisfied if and only if $W=\hat Q_S^{-1}$. Therefore, constraint \eqref{eq:miqoSDP_psd} reduces to 
	$$\small \begin{pmatrix}
	Q_S^{-1} &  \mathbf{0} &  x_S\\
	\mathbf{0} & \mathbf{0} & x_{[n]\setminus S}\\
	x_{S}^\top & x_{[n]\setminus S}^\top & t
	\end{pmatrix}\succeq 0.$$
	Since the pseudoinverse of matrix $W=\begin{pmatrix} 	Q_S^{-1} &  \mathbf{0}\\
	\mathbf{0} & \mathbf{0}\end{pmatrix}$ is $W^\dagger=\begin{pmatrix} 	Q_S &  \mathbf{0}\\
	\mathbf{0} & \mathbf{0}\end{pmatrix}$, we find 
	from Lemma~\ref{lem:schur} that constraint \eqref{eq:miqoSDP_psd} is satisfied if and only if: 
	\begin{itemize}
\item  $W\succeq 0$, which is automatically satisfied.  
	
\item 	$ WW^\dagger x=x\Leftrightarrow \begin{pmatrix} I &  \mathbf{0}\\
	\mathbf{0} & \mathbf{0}\end{pmatrix}\begin{pmatrix}x_S\\x_{[n]\setminus S}\end{pmatrix}=\begin{pmatrix}x_S\\x_{[n]\setminus S}\end{pmatrix}\Leftrightarrow x_{[n]\setminus S}=0.$ 
	Thus, condition $WW^\dagger x=x$ simply enforces the complementarity constraints $x\circ(\ones-z)=0$. 
	
	\item $t\geq x^\top W^\dagger x\Leftrightarrow t\geq x_S^\top Q_Sx_S$, which is precisely the nonlinear constraint defining set $X$. 
	\end{itemize}
\revised{Now, it is clear that for any $(x, z, t, W)$ satisfying constraints \eqref{eq:miqoSDP_psd}, \eqref{eq:miqoSDP_polytope}, \eqref{eq:miqoSDP_feasible}, it holds $(x, z, t) \in X$. On the other hand, for any $(x, z, t) \in X$ with $z = \hat e_{S}$ for some $S \subset [n]$, we can always let $W = \hat Q_{S}^{-1}$ and similarly, $(x, z, W, t)$ satisfies constraints \eqref{eq:miqoSDP_psd}, \eqref{eq:miqoSDP_polytope}, \eqref{eq:miqoSDP_feasible}.}
\end{proof}

Note that condition $WW^\dagger x=x$ is used to enforce the complementarity constraints. We point out that a similar idea was recently used in the context of low-rank optimization \cite{bertsimas2020mixed}.

Now consider the convex relaxation of \eqref{eq:miqoSDP}, obtained by dropping the integrality constraints $z\in \{0,1\}^n$: 
\begin{subequations}\label{eq:miqoRelax}
	\begin{align}
	\min_{x,z,W,t}\;&a^\top x+b^\top z+\tfrac{1}{2}t\\
	\text{s.t.}\;&\eqref{eq:miqoSDP_psd},\eqref{eq:miqoSDP_polytope},\eqref{eq:miqoSDP_x}.
	\end{align}
\end{subequations}

\revised{
\begin{theorem}\label{thm:canonical}
Let $Q$ be a positive definite matrix. Then
\begin{equation*}
\clconv(X) \; = \; \{(z, x, t) \in [0,1]^{n} \times \R^{n+1} \; | \; \exists W \in \R^{n \times n} \text{s.t.}    \; \eqref{eq:miqoSDP_psd},\eqref{eq:miqoSDP_polytope}\}.
\end{equation*}
Consequently, the problem \eqref{eq:miqoRelax} has an optimal solution integral in $z$.
\end{theorem}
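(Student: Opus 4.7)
The plan is to prove the set equality by establishing the two inclusions separately, and then derive the integrality statement as a direct corollary of the constructive argument for the harder direction. I write $\mathrm{RHS}$ for the set on the right-hand side of the claimed identity.

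The easy inclusion $\clconv(X)\subseteq\mathrm{RHS}$ follows almost immediately from Proposition~\ref{prop:valid}: for every $(x,z,t)\in X$ with $z=\hat\ones_S$, choosing $W=\hat Q_S^{-1}$ produces a lift satisfying \eqref{eq:miqoSDP_psd}--\eqref{eq:miqoSDP_polytope}, so $X\subseteq\mathrm{RHS}$. Since the PSD cone is convex and $P$ is a polytope, the lifted feasible set is convex, and hence so is its projection $\mathrm{RHS}$. A short closedness argument---using that $W$ is bounded through $(z,W)\in P$ and that the PSD cone is closed---then yields $\clconv(X)\subseteq\mathrm{RHS}$.

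For the reverse inclusion, take any $(x,z,t,W)$ feasible for \eqref{eq:miqoSDP_psd}--\eqref{eq:miqoSDP_polytope}. Because $(z,W)\in P$, there exist multipliers $\lambda_k>0$ summing to $1$ and sets $S^k$ with $\hat\ones_{S^k}\in Z$ such that $z=\sum_k\lambda_k\hat\ones_{S^k}$ and $W=\sum_k\lambda_k\hat Q_{S^k}^{-1}$. Applying Lemma~\ref{lem:schur} to \eqref{eq:miqoSDP_psd} gives $WW^\dagger x=x$ (equivalently $x\in\mathrm{col}(W)$) and $t\geq x^\top W^\dagger x$. The key step is to define $x^k:=\hat Q_{S^k}^{-1} W^\dagger x$; since $\hat Q_{S^k}^{-1}$ is supported on $S^k\times S^k$, each $x^k$ has support in $S^k$, and summation gives $\sum_k\lambda_k x^k=WW^\dagger x=x$. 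Using the pseudoinverse identity $W^\dagger W W^\dagger=W^\dagger$ and symmetry of $W$, a direct algebraic check yields
\begin{equation*}
\sum_k\lambda_k (x^k)^\top Q_{S^k} x^k \;=\; (W^\dagger x)^\top W(W^\dagger x)\;=\;x^\top W^\dagger x\;\leq\; t.
\end{equation*}
Setting $t^k:=(x^k)^\top Q_{S^k}x^k$, each $(x^k,\hat\ones_{S^k},t^k)$ lies in $X$, so $(x,z,\sum_k\lambda_k t^k)\in\conv(X)$; absorbing the nonnegative slack $t-\sum_k\lambda_k t^k$ into any one $t^k$ (which stays in $X$ because the $t$-direction is a recession direction of $X$) gives $(x,z,t)\in\conv(X)\subseteq\clconv(X)$.

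The corollary on integral-in-$z$ optima is then a direct consequence of this constructive decomposition: if $(x^*,z^*,t^*,W^*)$ is optimal for \eqref{eq:miqoRelax}, the decomposition expresses its objective value as a convex combination of MIQO objective values at integer points $(x^k,\hat\ones_{S^k},t^k)$ (after absorbing the slack), so at least one such integer point attains the same value and is feasible for \eqref{eq:miqoSDP} via the lift $W=\hat Q_{S^k}^{-1}$. The main technical obstacle I anticipate is verifying the identity $\sum_k\lambda_k (x^k)^\top Q_{S^k}x^k=x^\top W^\dagger x$: it requires careful handling of the zero-padded structure of the matrices $\hat Q_{S^k}^{-1}$ (so that $(x^k)^\top Q_{S^k} x^k$ on the support $S^k$ agrees with an $n$-dimensional quadratic form in $W^\dagger x$), together with the pseudoinverse identities collected in Definition~\ref{def:pseudoinverse}. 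A secondary technical point is the closedness of the projection $\mathrm{RHS}$, which is required to pass from $\conv(X)$ to $\clconv(X)$ in the easy direction.
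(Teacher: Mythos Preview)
Your proposal is correct and takes a genuinely different route from the paper.

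The paper argues via optimization: it fixes an objective $a^\top x+b^\top z+\tfrac{1}{2}t$, projects out $t$ using Lemma~\ref{lem:schur}, and then shows via KKT conditions that for any fixed $(z,W)\in P$ the optimal $x$ equals $-Wa$. Substituting back yields the \emph{linear} problem $\min\{-\tfrac{1}{2}a^\top Wa+b^\top z:(z,W)\in P\}$, for which an extreme point $(\hat\ones_S,\hat Q_S^{-1})$ is optimal; equality of the two closed convex sets follows because every such linear objective attains the same optimum on both. Your argument, by contrast, is a direct constructive decomposition: given $(x,z,t,W)$ with $(z,W)=\sum_k\lambda_k(\hat\ones_{S^k},\hat Q_{S^k}^{-1})$, you set $x^k:=\hat Q_{S^k}^{-1}W^\dagger x$ and verify $\sum_k\lambda_k x^k=x$ and $\sum_k\lambda_k (x^k)^\top Q_{S^k}x^k=x^\top W^\dagger x\le t$, which exhibits $(x,z,t)$ explicitly as a convex combination of points in $X$. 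The identity you flag as the main obstacle indeed goes through: writing $v:=W^\dagger x$, one has $(x^k)^\top Q_{S^k}x^k=v^\top\hat Q_{S^k}^{-1}v$, so the sum equals $v^\top Wv=x^\top W^\dagger WW^\dagger x=x^\top W^\dagger x$.

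What each approach buys: your decomposition is more elementary (no KKT, no optimality argument over a pseudoinverse objective) and produces the integrality corollary as an immediate by-product of the explicit representation; it also makes the membership $(x,z,t)\in\conv(X)$ (not merely $\clconv(X)$) transparent. The paper's optimization route, on the other hand, is the template that extends most naturally to the low-rank/factorable case of Theorem~\ref{thm:separable}, where unboundedness must be handled and no clean analogue of your map $x\mapsto \hat Q_{S^k}^{-1}W^\dagger x$ is available.
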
}

\begin{proof}
	\revised{
	First observe that constraints \eqref{eq:miqoSDP_psd},\eqref{eq:miqoSDP_polytope} define a closed convex set.}
	Projecting out variable $t$, we find that problem \eqref{eq:miqoRelax} reduces to 
	\begin{subequations}\label{eq:miqoRelax2-1}
		\begin{align}
		\min_{x,z,W}\;&a^\top x+b^\top z+\tfrac{1}{2}x^\top W^\dagger x\\
		\text{s.t.}\;&WW^\dagger x=x\label{eq:bounded}\\
		&(z,W)\in P,\; x\in \R^n.
		\end{align}
	\end{subequations}
\revised{Note that this formulation uses the pseudoinverse of a matrix of variables.} Observe that we omit the constraint $W\succeq 0$. Since every extreme point $(\bar z, \bar W)$ of $P$ satisfies $\bar W\succeq 0$, it follows $(z,W)\in P$ already implies $W\succeq 0$. 

We argue that for any fixed $(z, W)\in P$, setting $x=-Wa$ is optimal for \eqref{eq:miqoRelax2-1}. Using equality \eqref{eq:bounded}, we replace the term $a^\top x$ in the objective with $a^\top WW^\dagger x$. Since the problem is convex in $x$, from KKT conditions we find that any point $x$ satisfying
\begin{subequations}\label{eq:KKT}
\begin{align}
& WW^\dagger x=x\label{eq:KKT_primalfeas}\\
&\exists \lambda\in \R^n \text{ s.t. } W^\dagger  Wa+ W^\dagger x+\lambda^\top( W W^\dagger -I)=0\label{eq:KKT_dualfeas}
\end{align}
\end{subequations}
is optimal. In particular, setting $x=-Wa$, we find that \eqref{eq:KKT_dualfeas} is satisfied with $\lambda=0$, and \eqref{eq:KKT_primalfeas} is satisfied since $W W^\dagger x=- W W^\dagger  Wa=- Wa=x$.

 Substituting $x=-Wa$ in the relaxed problem, we obtain 
	\begin{subequations}\label{eq:miqoRelax3-1}
		\begin{align}
		\min_{z,W}\;&-\tfrac{1}{2} a^\top Wa+b^\top z\\
		\text{s.t.}\;&(z,W)\in P.
		\end{align}
	\end{subequations}
	Since the objective $-\frac{1}{2} \langle aa^\top,W\rangle+b^\top z$ is linear in $(z,W)$ and $P$ is a polytope, there exists an optimal solution $(z^*,W^*)$ that is an extreme point of $P$, and in particular there exists $\hat \ones_{S} \in Z$ such that $z^*=\hat\ones_S$ and $W^*=\hat Q_S^{-1}$.

\end{proof}

\begin{remark}
	The convexification for the case where $Q$ is tridiagonal \citep{liu2021graph} is precisely in the form given in Theorem~\ref{thm:canonical}, where the polyhedron $P$ is described with a compact extended formulation. \qed
\end{remark}

\subsubsection{Bivariate quadratic functions}\label{sec:2x2}
Consider set 
\begin{align*}
X_{2\times 2}\!=\!\left\{(x,z,t)\in \R^2\!\times \!\{0,1\}^n\!\times \!\R: t\geq d_1x_1^2-2x_1x_2+d_2x_2^2,\; x\!\circ \! (\ones-z)\!=\!0\right\},
\end{align*}
where $d_1d_2>1, d_1,d_2 > 0$. 
Set $X_{2\times 2}$ corresponds (after scaling) to a generic strictly convex quadratic function of two variables; conic quadratic disjunctive programming representations of $\clconv(X_{2\times 2})$ have been used in the literature \cite{anstreicher2021quadratic}, explicit representations of $\clconv\left(X_{2\times 2}\cap \{(x,z,t):x\geq 0\}\right)$ in the original space of variables have been given \cite{atamturk2021sparse,hga:2x2},  and descriptions of the rank-one case $d_1d_2=1$ were given in \cite{atamturk2019rank}. \revised{A description of $\clconv \left(X_{2 \times 2} \cap \{(x,z,t):\ell\leq x \leq u\}\right)$ in a conic quadratic extended formulation is given in \cite{frangioni2018decompositions} using disjunctive programming. This formulation can be easily adapted to the case with no bounds (considered here), and requires three additional variables and three conic quadratic constraints to use with solvers. }
We now give a more compact representation of $\clconv(X_{2\times 2})$ with free variables.

 We now illustrate Theorem~\ref{thm:canonical} by computing an extended formulation of $\clconv(X_{2\times 2})$, that is, for $Q=\small \begin{pmatrix}d_1&-1\\-1&d_2\end{pmatrix}$. Let $\Delta:=d_1d_2-1 >0$ be the determinant of $Q$. 

\begin{proposition}\label{prop:2x2extended}
The closure of the convex hull of $X_{2\times 2}$ is
	\begin{align*}
	\clconv(X_{2\times 2})=&\Bigg\{(x,z,t)\in \R^5:\exists W\in \R^{2\times 2}\text{ such that }
	\small \begin{pmatrix}W_{11}&W_{12}&x_1\\
	W_{12}&W_{22}&x_2\\
	x_1&x_2&t\end{pmatrix}\succeq 0,\\
	&0\leq z_1\leq 1,\; 0\leq z_2\leq 1,\;d_1W_{11}=W_{12}+z_1,\; d_2W_{22}=z_2+W_{12},\\
	&W_{12}\geq 0,\; \Delta W_{12}\geq -1+z_1+z_2,\;\Delta W_{12}\leq z_1,\; \Delta W_{12}\leq z_2 \Bigg\}.
	\end{align*}
\end{proposition}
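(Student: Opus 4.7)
The plan is to invoke Theorem~\ref{thm:canonical}, which represents $\clconv(X_{2\times 2})$ as the set of $(x,z,t)$ for which some $W$ satisfies the PSD block constraint \eqref{eq:miqoSDP_psd} together with $(z,W)\in P$, where $P=\conv\{(\hat\ones_S,\hat Q_S^{-1})\}_{S\subseteq\{1,2\}}$. Thus the whole task reduces to verifying that the linear constraints listed in the proposition describe the polytope $P$ exactly.

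First I write out the four extreme points of $P$ using the formulas from the example preceding \S\ref{sec:canonical}: in coordinates $(z_1,z_2,W_{11},W_{22},W_{12})$ they are $(0,0,0,0,0)$, $(1,0,1/d_1,0,0)$, $(0,1,0,1/d_2,0)$, and $(1,1,d_2/\Delta,d_1/\Delta,1/\Delta)$. A direct substitution shows each of these four points satisfies every constraint in the displayed linear system; in particular, the equalities $d_1W_{11}=W_{12}+z_1$ and $d_2W_{22}=W_{12}+z_2$ follow from the row identities of $QQ^{-1}=I$ restricted to the relevant support (and hold trivially whenever the corresponding $z_i=0$). This establishes $P\subseteq\{(z,W):\text{linear system holds}\}$.

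For the reverse containment, I use the two equalities to eliminate $W_{11}$ and $W_{22}$, leaving a three-dimensional polytope in $(z_1,z_2,W_{12})$ cut out by $0\leq z_i\leq 1$, $W_{12}\geq 0$, $\Delta W_{12}\geq z_1+z_2-1$, $\Delta W_{12}\leq z_1$, and $\Delta W_{12}\leq z_2$. After the substitution $y=\Delta W_{12}$, this is exactly the McCormick envelope of the bilinear product $z_1z_2$ on $[0,1]^2$, which is classically known to coincide with the Boolean quadric polytope $\conv\{(z_1,z_2,z_1z_2):z\in\{0,1\}^2\}$; the latter has exactly the four vertices $(0,0,0),(1,0,0),(0,1,0),(1,1,1)$. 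Pulling back through the substitutions (recovering $W_{12}=y/\Delta$ and then $W_{11},W_{22}$ from the two equalities), these four vertices correspond precisely to the four extreme points of $P$ listed above. This proves the reverse inclusion and hence the proposition.

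The main obstacle is the tightness direction---ruling out ``fractional'' extreme points of the linear system. By recognizing the reduced three-dimensional slice as the $n=2$ McCormick/Boolean-quadric polytope, this reduces to a classical result, sidestepping a tedious case enumeration of vertices of a polytope defined by eight inequalities in $\R^3$.
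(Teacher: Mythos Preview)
Your proof is correct and follows essentially the same route as the paper: both arguments invoke Theorem~\ref{thm:canonical} to reduce the problem to describing $P$, verify the two linear equalities, eliminate $W_{11},W_{22}$ via those equalities, and then identify the resulting three-dimensional polytope in $(z_1,z_2,W_{12})$ with the convex hull of the four integer points. The paper phrases the last step as the convex hull of $\{(z_1,z_2,w):w=\tfrac{1}{\Delta}\min\{z_1,z_2\},\,z\in\{0,1\}^2\}$, whereas you phrase it as the McCormick/Boolean-quadric envelope of $z_1z_2$; these are of course the same set on $\{0,1\}^2$, and indeed the paper makes the McCormick connection explicit in Remark~\ref{rem:McCormick}.
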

\begin{proof}
	Polyhedron $P$ is the convex hull of the four points given in Table~\ref{tab:P2times2}. 
	\begin{table}[!h]
		\begin{center}
\caption{Extreme points of $P$ corresponding to set $X_{2\times 2}$.}
\label{tab:P2times2}
	\begin{tabular}{c c c} \hline \hline
		$z_1$&$z_2$&$W$\\
		\hline
		$0$&$0$&$\small \begin{pmatrix}0&0\\0&0\end{pmatrix}$\\
		$1$&$0$&$\small \begin{pmatrix}1/d_1&0\\0&0\end{pmatrix}$\\
		$0$&$1$&$\small \begin{pmatrix}0&0\\0&1/d_2\end{pmatrix}$\\
		$1$&$1$&$\small \frac{1}{\Delta}\begin{pmatrix}d_2&1\\1&d_1\end{pmatrix}$\\
		\hline \hline
	\end{tabular}
\end{center}
\end{table}

Note that equalities $W_{11}=\frac{1}{d_1}(z_1+W_{12})$ and  $W_{22}=\frac{1}{d_2}(z_2+W_{12})$ are valid. Letting $w=W_{12}$ and projecting out variables $W_{11}$ and $W_{22}$, we find that \begin{equation}\small 
\label{eq:matrixFormW}W=\begin{pmatrix}\frac{1}{d_1}z_1 &0\\0&\frac{1}{d_2}z_2\end{pmatrix}+\begin{pmatrix}1/d_1&1\\1&1/d_2\end{pmatrix}w.
\end{equation}
\revised{
Also note that $w = \frac{1}{\Delta} \min \{z_1, z_2\}$, and the convex hull of $\big \{(z_1, z_2, w) \in \{0,1\}^2 \times \R \; | \; w = \frac{1}{\Delta} \min \{z_1, z_2\} \big \}$ is described by the following inequalities:
\begin{align}\label{eq:2times2_diag} \small
&w\geq 0,\; w\geq \frac{1}{\Delta}(-1+z_1+z_2),\; w\leq \frac{1}{\Delta}z_2,\; w\leq \frac{1}{\Delta}z_1, 0 \leq z_1, z_2 \leq 1
\end{align}
Then, \eqref{eq:matrixFormW} and \eqref{eq:2times2_diag} describe the polyhedron $P$.
}

\end{proof}
\begin{remark}
	Since $P$ is not full-dimensional, we require only one additional variable $w$ (instead of three) for conic representation of $\clconv(X_{2\times 2})$ via the constraints $0\leq z\leq 1$, \eqref{eq:2times2_diag}, and
	$$\small \begin{pmatrix}(1/d_1)(z_1+w)&w&x_1\\
	w&(1/d_2)(z_2+w)&x_2\\
	x_1&x_2&t\end{pmatrix}\succeq 0.$$
	\qed
\end{remark}

\begin{remark}\label{rem:McCormick}
	The matrix representation \eqref{eq:matrixFormW} suggests an interesting connection between $\clconv(X_{2\times 2})$ and McCormick envelopes. Indeed, from Table~\ref{tab:P2times2}, we see that
	$$\small W=\begin{pmatrix}1/d_1&0\\0&0\end{pmatrix}z_1+\begin{pmatrix}0&0\\0&1/d_2\end{pmatrix}z_2+\frac{1}{\Delta}\begin{pmatrix}1/d_1&1\\1&1/d_2\end{pmatrix}z_1z_2.$$
	Moreover, the usual McCormick envelopes of the bilinear term $z_1z_2$, given by $\max\{0,-1+z_1+z_2\}\leq z_1z_2\leq \min\{z_1,z_2\}$, are sufficient to characterize the convex hull. 
	 \qed
\end{remark}

\subsubsection{Quadratic functions with ``choose-one" constraints}\label{sec:choose1}

Given $Q\succ0$, consider set 
\begin{align*}
X_{C1}\!=\!\left\{\!(x,z,t)\in \R^n\!\times \!\{0,1\}^n\!\times \R: t\geq x^\top Qx,\; x\circ (\ones-z)=0,\;\sum_{i=1}^nz_i\leq 1\right\} \cdot
\end{align*}
Set $X_{C1}$ arises, for example, in regression problems with multicollinearity constraints \cite{bertsimas2015or}: given a set of $J$ features that are collinear, constraints $\sum_{i\in J}z_i\leq 1$ are used to ensure that at most one such feature is chosen.

	The closure of the convex hull of $X_{C1}$ is \revised{\cite[see, e.g.,][]{frangioni2018decompositions,wei2020convexification}}
	\begin{align*}
	\clconv(X_{C1})=&\Bigg\{(x,z,t)\in \R^{n}\times \R_+^n\times \R:t\geq \sum_{i=1}^n Q_{ii}x_i^2/z_i,\; \sum_{i=1}^nz_i\leq 1 \Bigg\} \cdot
	\end{align*}

\revised{We now give an alternative derivation of this result using our technique.}	Polyhedron $P$ is the convex hull of $n+1$ points: point $(0,0)$ and points $\{(\hat\ones_{\{i\}},\hat Q_{\{i\}}^{-1})\}_{i=1}^n$. It can easily be seen that $P$ is described by constraints $W_{ij}=0$ whenever $i\neq j$, $W_{ii}=z_i/Q_{ii}$ for $i\in [n]$, and constraints $z\geq 0$, $\sum_{i=1}^n z_i\leq 1$. In particular, constraint \eqref{eq:miqoSDP_psd} reduces to 
	\begin{align*}\small 
 &\begin{pmatrix}z_1/Q_{11}&0&\dots&0&x_1\\
	0&z_2/Q_{22}&\dots&0&x_2\\
	\vdots&\vdots&\ddots&\vdots&\vdots\\
	0&0&0&z_n/Q_{nn}&x_n\\
	x_1&x_2&\dots&x_n&t
	\end{pmatrix}\succeq 0,\end{align*}
which \revised{by Lemma ~\eqref{lem:schur}} is equivalent to
\[
t\geq \sum_{i=1}^n Q_{ii}x_i^2/z_i,\; z_i/Q_{ii}\geq 0,
\]
\revised{and $x_i = 0$ if $z_i/Q_{ii}=0, \; \forall i \in [n]$. Note that the second condition is the complementarity constraint, which is already included in the constraint $t\geq \sum_{i=1}^n Q_{ii}x_i^2/z_i$. }
\subsection{Factorable representation}\label{sec:separable}
 
A (possibly low-rank) 
matrix $Q\in \R^{n\times n}$ is positive semidefinite if and only if there exists  some $F\in \R^{n\times k}$ such  that $Q=FF^\top$. Then, letting $u=F^\top x$, one can rewrite $x^\top Qx$ as $x^\top FF^\top x=u^\top u$. Matrix $F$ may be immediately available when formulating the problem, or may be obtained through a Cholesky decomposition or eigendecomposition of $Q$. Such a factorization is often employed by solvers, since it results in simpler (separable) nonlinear terms, and in many situations matrix $F$ is sparse as well. In this section, we discuss representations of $\clconv(X)$ amenable to such factorizations of $Q$.  
While the proofs of the propositions of this section are similar to those in Section \ref{sec:canonical},  additional care is required to handle unbounded problems~\eqref{eq:miqo} arising from a rank-deficient $Q$. 

Given $F\in \R^{n\times k}$, define $F_S\in \R^{S\times k}$ as the submatrix of $F$ corresponding to the rows indexed by $S$, and let $\hat F_S\in \R^{n\times k}$ 
be the matrix obtained by filling the missing entries with zeros. 
Define the polytope $P_F\subseteq \R^{n+k^2}$ as 
$$
P_F=\conv\left(\left\{(\hat \ones_S,\hat F_S^\dagger \hat F_S)\right\}_{\revised{\hat \ones_{S} \in Z}}\right) \cdot
$$

\begin{remark}\label{rem:projection}
For any $S\subseteq [n]$, matrix $\hat F_S^\dagger \hat F_S$ is an orthogonal projection matrix (symmetric and idempotent), and in particular $(\hat F_S^\dagger \hat F_S)^\dagger=\hat F_S^\dagger \hat F_S$. These properties can be easily verified from Definition~\ref{def:pseudoinverse}. Since all eigenvalues of an orthogonal projection matrix are either $0$ or $1$, it also follows that $\hat F_S^\dagger \hat F_S\succeq 0$.  \qed
\end{remark}

\begin{proposition}\label{prop:validSep}
	If $Q=FF^\top$, then the mixed-integer optimization model
	\begin{subequations}\label{eq:miqoSDPSep2}
		\begin{align}
		\min_{x,z,W,t}\;&a^\top x+b^\top z+\tfrac{1}{2}t\\
		\text{s.t.}\;&\begin{pmatrix}W & F^\top x\\
		x^\top F&t\end{pmatrix}\succeq 0\label{eq:miqoSDPSep2_psd}\\
		&(z,W)\in P_F\label{eq:miqoSDPSep2_polytope}\\
		& z\in \{0,1\}^n,\; x\circ (\ones-z)=0\label{eq:miqoSDPSep2_feasible}\\
		&x\in \R^n, t\in \R\label{eq:miqoSDPSep2_x}
		\end{align}
	\end{subequations}
	is a valid formulation of problem \eqref{eq:miqo}. 
\end{proposition}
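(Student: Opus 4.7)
The plan is to mirror the proof strategy of Proposition~\ref{prop:valid}, but with two essential modifications to accommodate the factored representation $Q=FF^\top$ and the potential rank-deficiency of $Q$. First, I would take an arbitrary point $(x,z,W,t)$ satisfying constraints \eqref{eq:miqoSDPSep2_polytope}--\eqref{eq:miqoSDPSep2_feasible}, and let $S=\{i:z_i=1\}$, so that $\hat\ones_S\in Z$. Because $P_F$ is a polytope whose vertices are in bijection with the feasible supports through the map $S\mapsto (\hat\ones_S,\hat F_S^\dagger\hat F_S)$, and because $z=\hat\ones_S$ is an extreme point of the projection of $P_F$ onto the $z$-coordinates, constraint \eqref{eq:miqoSDPSep2_polytope} forces $W=\hat F_S^\dagger \hat F_S$.

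Next, I would invoke the complementarity constraint in \eqref{eq:miqoSDPSep2_feasible} to write $F^\top x=\hat F_S^\top x$, since $x_i=0$ for $i\notin S$. The PSD constraint \eqref{eq:miqoSDPSep2_psd} then reads
\begin{equation*}
\begin{pmatrix}\hat F_S^\dagger\hat F_S & \hat F_S^\top x\\ x^\top\hat F_S & t\end{pmatrix}\succeq 0.
\end{equation*}
I would apply Lemma~\ref{lem:schur} to this block matrix, using the crucial fact from Remark~\ref{rem:projection} that $W=\hat F_S^\dagger\hat F_S$ is an orthogonal projection onto $\mathrm{col}(\hat F_S^\top)$, so $W\succeq 0$ and $W^\dagger=W$. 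The three conditions of the lemma reduce to: (i) $W\succeq 0$, which is automatic; (ii) the range condition $W W^\dagger(F^\top x)=F^\top x$, which becomes $\hat F_S^\dagger\hat F_S\hat F_S^\top x=\hat F_S^\top x$ and holds because $\hat F_S^\top x\in\mathrm{col}(\hat F_S^\top)$; and (iii) the Schur complement inequality $t\geq (F^\top x)^\top W^\dagger (F^\top x)$, which collapses to $t\geq (\hat F_S^\top x)^\top(\hat F_S^\top x)=x^\top FF^\top x=x^\top Qx$ by applying the projection identity once more. This shows any feasible point of \eqref{eq:miqoSDPSep2} projects onto a point of $X$ with equal objective value.

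For the converse, given any $(x,z,t)\in X$ with $z=\hat\ones_S$, I would lift by setting $W=\hat F_S^\dagger\hat F_S$. Then \eqref{eq:miqoSDPSep2_polytope} and \eqref{eq:miqoSDPSep2_feasible} are satisfied by construction, and the same Schur-complement computation run in reverse shows that \eqref{eq:miqoSDPSep2_psd} holds exactly because $t\geq x^\top Qx$ and $x\circ(\ones-z)=0$. The main subtlety to watch for is the range condition in Lemma~\ref{lem:schur}, which would fail without the complementarity constraint: this is precisely why, in contrast to Proposition~\ref{prop:valid} where $WW^\dagger x=x$ \emph{by itself} enforced complementarity, here we must include $x\circ(\ones-z)=0$ explicitly in \eqref{eq:miqoSDPSep2_feasible}, since $F^\top$ can collapse nonzero $x$ with $x_i\neq 0$ for $i\notin S$ into a vector still lying in the column space of $\hat F_S^\top$.
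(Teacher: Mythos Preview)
Your proposal is correct and follows essentially the same approach as the paper: fix $z=\hat\ones_S$, deduce $W=\hat F_S^\dagger\hat F_S$ from \eqref{eq:miqoSDPSep2_polytope}, use complementarity to rewrite $F^\top x=\hat F_S^\top x$, and then verify the three Schur-complement conditions of Lemma~\ref{lem:schur} via the projection properties in Remark~\ref{rem:projection}. Your framing is slightly more explicit in separating the two directions and in explaining why the range condition holds (via $\hat F_S^\top x\in\mathrm{col}(\hat F_S^\top)$), whereas the paper computes it by a direct chain of equalities; your closing remark on why complementarity must be imposed explicitly is exactly the point the paper makes immediately after the proof.
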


\begin{proof}
	Consider a point $(x,z,t)\in X$ with $z=\hat \ones_S$ for some $\hat \ones_{S} \in Z$. Constraint \eqref{eq:miqoSDPSep2_feasible} is trivially satisfied. Constraint \eqref{eq:miqoSDPSep2_polytope} is satisfied if and only if $W=\hat F_S^{\dagger}\hat F_S$. Note that in any feasible solution, $x_i=0$ whenever $i\not\in S$, and in particular $F^\top x=\hat F_{S}^\top x$. From Lemma~\ref{lem:schur}, we find that constraint \eqref{eq:miqoSDPSep2_psd} is satisfied if and only if (recall properties in Remark~\ref{rem:projection}): 
	\begin{itemize}
		\item  $\hat F_S^{\dagger}\hat F_S \succeq 0$, which is automatically satisfied.  
		
		\item 	$ \hat F_S^{\dagger}\hat F_S(\hat F_S^{\dagger}\hat F_S)^\dagger F^\top x=F^\top x.$  
		  We find that $$\hat F_S^{\dagger}\hat F_S(\hat F_S^{\dagger}\hat F_S)^\dagger \hat F_S^\top x=\hat F_S^{\dagger}\hat F_S\hat F_S^{\dagger}\hat F_S \hat F_S^\top x=\hat F_S^{\dagger}\hat F_S \hat F_S^\top x=\hat F_S^{^\top}(\hat F_S^\dagger)^\top \hat F_S^\top x=\hat F_S^\top  x,$$
		 and, therefore, this condition is satisfied as well. 
		
		\item $t\geq x^\top F W^\dagger F^\top x\Leftrightarrow t\geq x_S^\top \hat F_S (\hat F_S^\dagger \hat F_S)^\dagger\hat F_S^\top x_S=x_S^\top \hat F_S \hat F_S^\dagger \hat F_S\hat F_S^\top x_S=x_S^\top \hat F_S\hat F_S^\top x_S$, which is precisely the nonlinear constraint defining set $X$ and is thus satisfied. 
	\end{itemize}
	
\end{proof}

While the proofs of Proposition~\ref{prop:valid} and \ref{prop:validSep} are similar in spirit, we highlight a critical difference. In the proof of Proposition~\ref{prop:valid}, with the assumption $Q\succ 0$, constraints $WW^\dagger x=x$ enforce the complementarity constraints $x\circ (\ones-z)=0$, and therefore, such constraints are excluded in \eqref{eq:miqoSDP}. In contrast, in the proof of Proposition~\ref{prop:validSep}, with $Q$ potentially of low-rank, 
constraints $WW^\dagger F^\top x=F^\top x$ alone are not sufficient to enforce $x\circ (\ones-z)=0$, and therefore, they are included in \eqref{eq:miqoSDPSep2} and are used to prove the validity of the mixed-integer formulation.
Indeed, if there exist $\hat \ones_{S} \in Z$ and $\bar x\in \R^{n}$ such that $\bar x_S\neq 0$, $\bar x_{[n]\setminus S}=0$ and  $F^\top \bar x=0$, then for any $(x,z,t)\in X$ we find that
$$\lim_{\lambda\to 0^+}(1-\lambda)(x,z,t)+\lambda ((1/\lambda)\bar x,\hat\ones_S,0)=(x+\bar x, z,t)\in \clconv(X).$$
In particular, the point $(x+\bar x,z,t)$, which may not satisfy the complementarity constraints, cannot be separated from $\clconv(X)$, or any closed relaxation. 
On the other hand, if matrix $Q$ is full-rank, then $F^\top \bar x=0\implies \bar x=0$ (as shown in the proof of Proposition~\ref{prop:valid}); therefore, the complementarity constraints are enforced by the conic constraint.  

Recall that $\pi_S : \R^{n} \rightarrow \R^{S}$ is the projection onto the subspace indexed by $S$. Now we consider the natural convex relaxation of \eqref{eq:miqoSDPSep2} \revised{by dropping constraint \eqref{eq:miqoSDPSep2_feasible}, and show that it is ideal under certain technical conditions over $F$ and the set $Z$, as stated in Theorem ~\ref{thm:separable} below.}

\revised{
\begin{theorem}\label{thm:separable}
Let $Q = F F^{\top}$, where $F \in \R^{n \times k}$ is a full-column rank matrix satisfying $\mathrm{col}(F) = \bigcap_{\hat \ones_{S} \in Z} \pi_S^{-1}(\mathrm{col}(F_S))$. Then
\begin{equation*}
\clconv(X) \; = \; \{(z, x, t) \in [0,1]^{n} \times \R^{n+1} \; | \; \exists W \in \R^{k \times k} \; \text{s.t.} \ \eqref{eq:miqoSDPSep2_psd}, \eqref{eq:miqoSDPSep2_polytope}\}.
\end{equation*}
\end{theorem}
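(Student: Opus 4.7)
The plan is to follow the blueprint of the proof of Theorem~\ref{thm:canonical}, but with additional care to handle vectors in $\ker(F^{\top})$, which are exactly the source of the pathological recession directions flagged in the discussion after Proposition~\ref{prop:validSep}. Call the right-hand side $\mathcal{R}$. The inclusion $\clconv(X)\subseteq\mathcal{R}$ is immediate: for any $(x,z,t)\in X$ with $z=\hat\ones_S$, the lift $W=\hat F_S^{\dagger}\hat F_S$ satisfies \eqref{eq:miqoSDPSep2_psd}--\eqref{eq:miqoSDPSep2_polytope} by the Schur-complement calculation already carried out in Proposition~\ref{prop:validSep}, and $\mathcal{R}$ is closed and convex.

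For the reverse inclusion, I would use support-function duality: since both sets are closed convex sets, it is enough to check that for every direction $(a,b,c)$ with $c\ge 0$, the infimum of $a^{\top}x+b^{\top}z+\tfrac{c}{2}t$ over $X$ equals its infimum over $\mathcal{R}$. Rescaling reduces to $c=1$, and the case $c=0$ follows by a limiting argument. Applying Schur complement to \eqref{eq:miqoSDPSep2_psd} and projecting out $t$, the problem over $\mathcal{R}$ becomes
\begin{equation*}
\min_{x,z,W}\; a^{\top}x+b^{\top}z+\tfrac{1}{2}x^{\top}FW^{\dagger}F^{\top}x\quad\text{s.t.}\quad (z,W)\in P_F,\; WW^{\dagger}F^{\top}x=F^{\top}x.
\end{equation*}
Since $F$ has full column rank, I would decompose $a=F\alpha+a^{\perp}$ with $a^{\perp}\in\ker(F^{\top})$, where $\alpha$ is unique. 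If $a^{\perp}\neq 0$, then $a\notin\mathrm{col}(F)$, so by the hypothesis there exists $\hat\ones_S\in Z$ with $a_S\notin\mathrm{col}(F_S)$; choosing $\bar x\in\R^n$ supported on $S$ with $F_S^{\top}\bar x_S=0$ and $a_S^{\top}\bar x_S<0$ produces a recession direction along which the objective over $\mathcal{R}$ drops to $-\infty$, and the same direction preserves membership in $X$ (complementarity holds trivially, and $\bar x^{\top}Q\bar x=\|F^{\top}\bar x\|_2^{2}=0$), so the infimum over $X$ is also $-\infty$. Otherwise $a=F\alpha$; rewriting $a^{\top}x=\alpha^{\top}WW^{\dagger}F^{\top}x$ via the range condition and applying a KKT computation analogous to \eqref{eq:KKT} yields the explicit optimum $F^{\top}x=-W\alpha$ for each fixed $(z,W)$, with reduced objective $-\tfrac{1}{2}\langle\alpha\alpha^{\top},W\rangle+b^{\top}z$, which is linear in $(z,W)$. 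Minimizing this linear functional over the polytope $P_F$ is attained at an extreme point $(\hat\ones_S,\hat F_S^{\dagger}\hat F_S)$, and a primal $x$ supported on $S$ with $F^{\top}x=-\hat F_S^{\dagger}\hat F_S\alpha$ exists because $\hat F_S^{\dagger}\hat F_S$ is the orthogonal projection onto $\mathrm{col}(\hat F_S^{\top})$; the resulting point lies in $X$ and achieves the same value, completing the equality of infima.

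The main obstacle is verifying that the hypothesis $\mathrm{col}(F)=\bigcap_{\hat\ones_S\in Z}\pi_S^{-1}(\mathrm{col}(F_S))$ is exactly the compatibility condition needed to align the recession cones of $\mathcal{R}$ and $\clconv(X)$. The delicate implication ``$a\notin\mathrm{col}(F)\Rightarrow$ the infimum over $X$ is $-\infty$'' can fail without it: if $a$ lies in every $\pi_S^{-1}(\mathrm{col}(F_S))$ yet outside $\mathrm{col}(F)$, then no descent direction of the kind exploited above is available inside $X$ for any feasible $z=\hat\ones_S$, while $\mathcal{R}$ still admits one (along a vector $\bar x$ with $F^{\top}\bar x=0$ that is not compatible with complementarity for any feasible support), producing a spurious integrality gap. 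Together with the uniqueness of $\alpha$ in $a=F\alpha$ afforded by full column rank, this is the structural input that distinguishes the argument from the positive-definite case of Theorem~\ref{thm:canonical}.
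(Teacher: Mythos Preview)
Your proposal is correct and follows essentially the same route as the paper's proof: both split into the cases $a\in\mathrm{col}(F)$ versus $a\notin\mathrm{col}(F)$, use the hypothesis in the latter case to exhibit a common recession direction making both problems unbounded, and in the former case reduce (via the Schur complement and the KKT substitution $F^{\top}x=-W\alpha$) to a linear minimization over $P_F$ attained at an extreme point $(\hat\ones_S,\hat F_S^{\dagger}\hat F_S)$. The only cosmetic difference is that the paper introduces $u=F^{\top}x$ and works in $\R^k$, then verifies equality of objective values by showing $\|F_S^{\dagger}F_S F^{\dagger}a\|_2=\|F_S^{\dagger}a_S\|_2$, whereas you construct a feasible $x$ supported on $S$ directly; both arguments are equivalent and your appeal to $\hat F_S^{\dagger}\hat F_S$ being the orthogonal projection onto $\mathrm{col}(F_S^{\top})$ is exactly what makes the construction work.
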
}

\begin{proof}
\DIFdelbegin 
\DIFdelend \DIFaddbegin \revised{
	Clearly, constraints \eqref{eq:miqoSDPSep2_psd},\eqref{eq:miqoSDPSep2_polytope} define a closed convex set.}
\DIFaddend 
Consider the two optimization problems:
\begin{subequations}\label{opt:X}
\begin{align}
\min \quad & a^{\top} x + b^{\top} z + \tfrac{1}{2}t \\
\text{s.t.} \; \; & (x,z,t) \in \clconv(X),  
\end{align}
\end{subequations}
and
\begin{subequations}\label{opt:Y}
\begin{align}
\min \quad & a^{\top} x + b^{\top} z + \tfrac{1}{2} t \\
\text{s.t.} \; \; & \begin{pmatrix}W& F^\top x \\
x^\top F  &t\end{pmatrix} \succcurlyeq \mathbf 0,\\
& (z,W)\in P_F,\; x\in \R^n, t\in \R. 
\end{align}	

\end{subequations}

It suffices to show that problem \eqref{opt:X} and \eqref{opt:Y} always attain the same optimal value. Consider the following two cases:

\vskip 1mm
$\bullet$ $F F^{\dagger} a \neq a$:
In other words, $a$ is not in the column space of $F$, i.e., $a \notin \mathrm{col}(F)$. In this case, by the condition $\mathrm{col}(F) = \bigcap_{\hat \ones_{S} \in Z} \pi_{S}^{-1}(\mathrm{col}(F_S))$, there exists one $\hat \ones_{S} \in Z$ such that $a_{S} \notin \mathrm{col}(F_S)$. Then, let $z$ be such that $z_i = 1, \; \forall i \in S$. \revised{Since $a_{S} \notin \mathrm{col}(F_{S})$, there exists $x$ such that $x_i = 0$ for all $i \in [n]\backslash S$, $x_{S}$ is in the orthogonal complement of $F_S$ and  $a_S^\top x_S < 0$.}
Clearly, $z$ and $x$ satisfy the constraint $x_i (1 - z_i) = 0$ for all $i = 1, \dots,n$. Complementarity holds for $\lambda x$ for $\lambda > 0$ as well. Since,
by construction, $x^{\top} F F^{\top} x = 0$, the objective  $b^\top z + \lambda \langle a, x \rangle + \lambda^2 (x^{\top} F F^{\top} x)$  tends to $-\infty$ for $(\lambda x, z)$ as $\lambda \to \infty$. Thus problem \eqref{opt:X} is unbounded and since problem \eqref{opt:Y} is a convex relaxation of \eqref{opt:X}, problem \eqref{opt:Y} is unbounded as well.

\vskip 1mm
$\bullet$ $F F^{\dagger} a = a$:
For problem (\ref{opt:Y}), we can project out $t$ using the relation
$$
\begin{pmatrix}
W & F^\top x   \\
x^{\top} F & t
\end{pmatrix} \succcurlyeq 0 \quad \text{iff} \quad W W^{\dagger} F^{\top} x = F^{\top} x \; \; \text{and} \; \; 
t \geq x^{\top} F W^{\dagger} F^{\top} x.
$$
Therefore, problem (\ref{opt:Y}) is equivalent to
\begin{subequations}\label{opt:Y_1}
\begin{align}
\min \quad & a^{\top} x + b^{\top} z + \tfrac{1}{2} x^{\top} F W^{\dagger} F^{\top} x \\
\text{s.t.} \; \; 
& W W^{\dagger} F^{\top} x = F^{\top} x  \\
& (z,W)\in P_F,\; x\in \R^n. 
\end{align}	
\end{subequations}

Since $F F^{\dagger} a = a$, we can write $a^{\top} x = (F^{\dagger} a)^{\top} F^{\top} x$. Define $\tilde a = F^{\dagger} a$, then $a^{\top} x = \tilde a^{\top} F^{\top} x$. Substituting $F^{\top} x$ with a new variable $u \in \R^{k}$ and since $F$ has full column rank, problem \eqref{opt:Y_1} is equivalent to 
\begin{subequations}\label{opt:Y_2}
\begin{align}
\min \quad & b^{\top} z + \tilde a^{\top} u + \tfrac{1}{2} u^{\top} W^{\dagger} u \\
\text{s.t.} \; \; 
& W W^{\dagger} u = u  \\
& (z,W)\in P_F, u\in \R^k. 
\end{align}	
\end{subequations}

Using identical arguments as in the proof of Theorem~\ref{thm:canonical}, we find that there exists $\hat \ones_{S}\in Z$ such that $(u^*,z^*,W^*)=(-\hat F_S^\dagger \hat F_S \tilde a, \hat \ones_S, \hat F_S^\dagger \hat F_S)$ is optimal for \eqref{opt:Y_2}. We now construct an optimal solution for \eqref{opt:Y_1}. Let $x^*$ be defined as $x_S^*=-(F_{S}^{\dagger})^{\top} F_{S}^{\dagger} a_{S}$ and $x_{[n]\setminus S}^*=0$, and observe that $(x^*,z^*)$ is feasible for \eqref{opt:X}, with objective $\sum_{i \in S} b_i - \frac{1}{2} \| F_{S}^{\dagger} a_{S} \|_2^2$. 
Substituting $W^{*} = \hat F_{S}^{\dagger} \hat F_{S}$, the optimal value of problem \eqref{opt:Y} equals 
$\sum_{i \in S} b_i - \frac{1}{2}  \|F_{S}^{\dagger} F_{S} F^{\dagger} a \|_2^2$.
Note that both $\alpha_1 = F^{\dagger} a$ and $\alpha_2 = F_{S}^{\dagger} a_{S}$ satisfy the equation $F_{S} \alpha = a_{S}$ and thus $\alpha_1 - \alpha_2$ is orthogonal to the row space of $F_{S}$ which means $F_{S}^{\dagger} F_{S} \alpha_1 = F_{S}^{\dagger} F_{S} \alpha_2 = \alpha_2$. Hence, we conclude that the optimal values of problem \eqref{opt:X} and problem \eqref{opt:Y} coincide.
\end{proof}

\begin{remark}
	\revised{From the first case analysis of the proof of Theorem~\ref{thm:separable}, one sees that the technical condition $\mathrm{col}(F) = \bigcap_{\hat \ones_{S} \in Z} \pi_S^{-1}(\mathrm{col}(F_S))$ is equivalent to stating that the mixed-integer optimization problem and the proposed convex relaxation are unbounded at the same time.} The condition is automatically satisfied if $\ones \in Z$.
	 Moreover, if matrix $Q$ is rank-one, then this condition is equivalent to the nondecomposability condition on $Z$ given in \cite{wei2021ideal}. 
	If it fails to hold, the convexification presented is still valid but may be weak: the convex relaxation may be unbounded even if the mixed-integer optimization problem is bounded. We provide an example illustrating this phenomenon in \S\ref{sec:technicalCondition}.  \qed
\end{remark}

\begin{remark}
	An immediate consequence of Theorem~\ref{thm:separable} is that if matrix $Q$ is rank-deficient, i.e., $k<n$, 	then the extended formulation describing $\clconv(X)$ is simpler than the full rank case, i.e., it has fewer additional variables and lower-dimensional conic constraints.\qed
\end{remark}

We now illustrate Theorem~\ref{thm:separable} by providing an alternative proof of the main result of \cite{atamturk2019rank} using our unifying framework. 
\subsubsection{Rank-one quadratic functions} \label{sec:rank-one}

Consider the rank-one set $$
X_{R1}=\left\{(x,z,t)\in \R^n\times \{0,1\}^n\times \R: t\geq \left(h^\top x\right)^2,\; x\circ (\ones-z)=0\right\},$$
where we assume $h_i\neq 0$ for all $i\in [n]$.

\begin{proposition}[\cite{atamturk2019rank}]\label{prop:rankOne}
The closure of the convex hull of $X_{R1}$ is
	\begin{align*}
	\clconv(X_{R1})=&\Bigg\{(x,z,t)\in \R^{2n+1}:
	\begin{pmatrix}\min\{1,\ones^\top z\}&h^\top x\\
	h^\top x&t\end{pmatrix}\succeq 0,\;0\leq z\leq \ones\Bigg\}.
	\end{align*}
\end{proposition}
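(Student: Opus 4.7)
The plan is to derive Proposition~\ref{prop:rankOne} as a direct specialization of Theorem~\ref{thm:separable}. Setting $F = h \in \R^{n \times 1}$ and $k = 1$ gives the rank-one decomposition $Q = FF^\top = hh^\top$, and the hypotheses of Theorem~\ref{thm:separable} must be checked first: $F$ has full column rank because $h \neq 0$, and the technical condition $\mathrm{col}(F) = \bigcap_{\hat\ones_S \in Z} \pi_S^{-1}(\mathrm{col}(F_S))$ can be verified by observing that $Z = \{0,1\}^n$ includes the set $S = [n]$, which forces the intersection into $\pi_{[n]}^{-1}(\mathrm{col}(h)) = \mathrm{col}(h)$, while the reverse inclusion $\mathrm{col}(h) \subseteq \pi_S^{-1}(\mathrm{col}(h_S))$ is immediate since $y = th$ implies $y_S = th_S \in \mathrm{col}(h_S)$ for every $S$.

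Next, I would compute $P_F$ explicitly. Because $k = 1$, each $\hat F_S^\dagger \hat F_S$ is a scalar: for $S = \emptyset$ it equals $0$, and for $S \neq \emptyset$ the hypothesis $h_i \neq 0$ ensures $\|h_S\| > 0$, giving $\hat F_S^\dagger = \hat F_S^\top/\|h_S\|^2$ and hence $\hat F_S^\dagger \hat F_S = 1$. Therefore $P_F = \conv\{(\hat\ones_S, \mathbf{1}[S \neq \emptyset]) : S \subseteq [n]\}$. A direct convex combination argument shows that $(z,W)\in P_F$ if and only if $0 \leq z \leq \ones$ and $\max_i z_i \leq W \leq \min\{1, \ones^\top z\}$: writing $W = 1 - \lambda_\emptyset$ and $\ones^\top z = \sum_S |S|\lambda_S \geq \sum_{S\neq \emptyset}\lambda_S = W$ gives the upper bound, and exhibiting the explicit weights $\lambda_\emptyset = 1 - \ones^\top z$, $\lambda_{\{i\}} = z_i$ (when $\ones^\top z \leq 1$) or a suitable interpolation toward $\hat\ones_{[n]}$ (otherwise) establishes tightness.

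Finally, the variable $W$ must be projected out. The key observation is that the $2\times 2$ PSD constraint $\begin{pmatrix} W & h^\top x \\ h^\top x & t\end{pmatrix} \succeq 0$ is monotone in $W$: for fixed $(x,t)$, increasing $W$ only enlarges the determinant $Wt - (h^\top x)^2$ and weakens the constraint. Hence, for any fixed $(z,x,t)$ the existential projection is obtained by taking the \emph{largest} admissible $W$, namely $W = \min\{1, \ones^\top z\}$. This value is always compatible with the lower bound $W \geq \max_i z_i$ because $\min\{1, \ones^\top z\} \geq z_i$ for every $i$. Substituting this value into the PSD block recovers precisely the description in the statement of the proposition.

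The bulk of the work is therefore already done by Theorem~\ref{thm:separable}; the main obstacle I anticipate is the intermediate polyhedral description of $P_F$, but as argued above only the upper envelope $W \leq \min\{1, \ones^\top z\}$ survives in the projection, so a full linear description of $P_F$ is not strictly needed—one only needs to verify that this upper bound is attained, which reduces to an elementary convex combination construction.
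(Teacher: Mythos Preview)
Your proposal is correct and follows essentially the same approach as the paper: apply Theorem~\ref{thm:separable} with $F=h$, observe that $\hat F_S^\dagger\hat F_S$ is the scalar $\mathbf{1}[S\neq\emptyset]$, identify the upper envelope $W\le\min\{1,\ones^\top z\}$ of $P_F$, and project out $W$ using monotonicity of the $2\times2$ PSD constraint. You are actually slightly more careful than the paper in two places---you explicitly verify the column-space hypothesis of Theorem~\ref{thm:separable} (the paper relies implicitly on the remark that $\ones\in Z$ suffices), and you record the lower bounds $W\ge z_i$ that also cut out $P_F$ before arguing they are irrelevant for the projection---but these are refinements of the same argument, not a different route.
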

\begin{proof}
	In the case of a rank-one function, we have $F=h$ and $W \in \R^1$. Note that the pseudoinverse of vector $\hat h_S$ is given by $$\hat h_S^\dagger=\begin{cases}0&\text{if }\hat h_S=0\\
	\hat h_S^\top/(\hat h_S^\top \hat h_S)&\text{otherwise,}\end{cases}$$
	and, in particular, we find that $\hat h_S^\dagger \hat h_S=1$ if $S\neq \emptyset$, and $\hat h_S^\dagger \hat h_S=0$ otherwise. Thus, $\hat h_S^\dagger \hat h_S=\max\{z_1,\dots,z_n\}$, and  $P_F$ is described by the linearization $0\leq W\leq \min\{1,\ones^\top z\}$. Projecting out variable $W$, we arrive at the result. 
\end{proof}

We discuss generalizations of $X_{R1}$ with arbitrary constraints on the indicator variables in Section~\ref{sec:original}.

\subsubsection{An example with a rank-two quadratic function}

In order to illustrate how convexification methods for polyhedra can be directly utilized to convexify the mixed-integer nonlinear set $X$, we consider 
a special rank-two quadratic function with three variables and the associated set
$$
X_{3}=\left\{(x,z,t)\in \R^3\times \{0,1\}^3\times \R: t\geq (x_1+x_2+x_3)^2+x_3^2,\; x\circ (\ones-z)=0\right\}.
$$
In this case, $Q = F F^\top$ with  $F^\top= \small \begin{pmatrix}1 &  1 &1 \\
0&0 & 1 \end{pmatrix}$. 
The extreme points of $P_F$ are given in Table~\ref{tab:extrX3}.
 Using PORTA \cite{PORTA} to switch from the extreme point representation  of $P_F$ to its 
 facial description, we obtain the closure of the convex hull of $X_{3}$:
	\begin{align*}
	\clconv(X_{3})=&\Bigg\{(x,z,t)\in \R^7:\exists W\in \R^{2\times 2}\text{ such that }\\
	&\small \begin{pmatrix}W_{11}&W_{12}&x_1+x_2+x_3\\
	W_{12}&W_{22}&x_3\\
	x_1+x_2+x_3&x_3&t\end{pmatrix}\succeq 0,\\
	&z_3=W_{12}+W_{22},\;0\leq W_{12}\leq W_{22}\leq W_{11},\\
	&z_3+\max\{z_1,z_2\}\leq W_{11}+W_{22}\leq z_1+z_2+z_3,\\
	&W_{11}+2W_{12}+W_{22}\leq 1+z_3\Bigg\}.
	\end{align*}

\begin{table}[!h]
	\caption{Extreme points of $P_F$ corresponding to set $X_3$.} 
	\label{tab:extrX3}
	\vskip 1mm
	\centering \small
	\begin{tabular}{c | c | c| c} 
		\hline \hline
		$z$ & $\hat F_S^\top$ & $\hat F_S^\dagger$ & $\hat F_S^\dagger \hat F_S$ \\
		\hline
		$(0,0,0)$ &\small $\begin{pmatrix}0 & 0 & 0\\
			0&0 & 0\end{pmatrix}$&$\begin{pmatrix}0 & 0&0\\0&
			0&0\end{pmatrix}$&$\begin{pmatrix}0 & 0\\0&
			0\end{pmatrix}$\\
		$(0,0,1)$ &\small $\begin{pmatrix}0 & 0&1\\
			0 & 0 &1\end{pmatrix}$&$\begin{pmatrix}0 & 0&1/2\\0&
			0&1/2\end{pmatrix}$&$\begin{pmatrix}1/2 & 1/2\\1/2&
			1/2\end{pmatrix}$\\
		$(0,1,0)$ &\small $\begin{pmatrix}0 & 1&0\\
			0&0 & 0\end{pmatrix}$&$\begin{pmatrix}0 & 1&0\\0&
			0&0\end{pmatrix}$&$\begin{pmatrix}1 & 0\\0&
			0\end{pmatrix}$\\
		$(0,1,1)$ &\small $\begin{pmatrix}0 &1&1\\
			0 & 0 &1\end{pmatrix}$&$\begin{pmatrix}0 & 1&0\\0&
			-1&1\end{pmatrix}$&$\begin{pmatrix}1 & 0\\0&
			1\end{pmatrix}$\\
		$(1,0,0)$ &\small $\begin{pmatrix}1 & 0&0\\
			0 & 0&0\end{pmatrix}$&$\begin{pmatrix}1 & 0&0\\0&
			0&0\end{pmatrix}$&$\begin{pmatrix}1 & 0\\0&
			0\end{pmatrix}$\\
		$(1,0,1)$ &\small $\begin{pmatrix}1 & 0&1\\
			0& 0&1\end{pmatrix}$&$\begin{pmatrix}1 & 0&0\\-1&
			0&1\end{pmatrix}$&$\begin{pmatrix}1 & 0\\0&
			1\end{pmatrix}$\\
		$(1,1,0)$ &\small $\begin{pmatrix}1 & 1&0\\
			0&0&0\end{pmatrix}$&$\begin{pmatrix}1/2 & 1/2&0\\0&
			0&0\end{pmatrix}$&$\begin{pmatrix}1 & 0\\0&
			0\end{pmatrix}$\\
		$(1,1,1)$ &\small $\begin{pmatrix}1 & 1&1\\
			0 & 0 &1\end{pmatrix}$&$\begin{pmatrix}1/2 & 1/2&0\\-1/2&
			-1/2 &1\end{pmatrix}$&$\begin{pmatrix}1 & 0\\0&
			1\end{pmatrix}$\\ \hline \hline
	\end{tabular}
\end{table}

\subsubsection{An example where the technical condition fails}
\label{sec:technicalCondition}
	Consider the set $$
	X_{R1}^{C1} \!=\!\left\{\!(x,z,t)\in \R^n \! \times \! \{0,1\}^n \! \times \! \R: t\geq \left(h^\top x\right)^2, x\circ (\ones-z)=0, \sum_{i=1}^n z_i\leq 1\right\}$$
	with $h_i\neq 0$ for $i\in [n]$. 
	In this case, $F=h$ and $\mathrm{col}(F_{\{i\}})=\R$ and $\pi_S^{-1}(\mathrm{col}(F_{\{i\}}))=\R^n$. Thus, $  \bigcap_{\hat \ones_{S}\in Z} \pi_S^{-1}(\mathrm{col}(F_S))=\R^n$, while $\mathrm{col}(F) = \{x\in \R^n: x=\lambda h \text{ for some }\lambda \in \R \}$, and the technical assumption is not satisfied. 
	
	The relaxation induced by \eqref{eq:miqoSDPSep2_psd}, \eqref{eq:miqoSDPSep2_polytope}, \eqref{eq:miqoSDPSep2_x}, which is constructed as outlined in Proposition~\ref{prop:rankOne}, results in the set induced by bound constraints $0\leq z\leq 1$, $\ones^\top z\leq 1$ and
	$t\geq (h^\top x)^2/(\ones^\top z)$. Moreover, the corresponding optimization problem
	$$\min_{x,z}\;a^\top x+ b^\top z+(h^\top x)^2/(\ones^\top z) \text{ s.t. }\ones^\top z\leq 1,\; x\in \R^n,\; z\in [0,1]^n$$
	is unbounded unless $a\in \mathrm{col}(F)$. 
	
	In contrast, $\clconv(X_{R1}^{C1})$ is described via constraint $t\geq \sum_{i=1}^n h_i^2x_i^2/z_i$ \cite{wei2020convexification,wei2021ideal} (similar to the result described in \S\ref{sec:choose1}), and the corresponding optimization problem is always bounded. 

\section{Convexification in the original space}\label{sec:original}

We now turn our attention to describing $\clconv(X)$ in the original space of variables. The discussion of this section is based on projecting out the matrix variable $W$ in the canonical description of $\clconv(X)$ given in Theorem~\ref{thm:canonical} for $Q\succ 0$. Identical arguments hold for the representation in Theorem~\ref{thm:separable} for low-rank matrices.

Suppose that a minimal description of polyhedron $P$ 
is given by the facet-defining inequalities 
\begin{equation}\langle \Gamma_i,W \rangle-\gamma_i^\top z\leq \beta_i, \quad i=1,\dots,m_1,\label{eq:facets}\end{equation}
and equalities
\begin{equation*}\langle \Gamma_i,W \rangle-\gamma_i^\top z= \beta_i, \quad i=m_1+1,\dots,m,\label{eq:faces}\end{equation*}
where $\Gamma_i\in \R^{n \times n},$ $\beta_i\in \R$ and $\gamma_i\in \R^n$. Theorem~\ref{thm:projection} describes $\clconv(X)$ in the original space of variables. \revised{Note that, in practice, a complete description may not be explicitly available, in which case one can use a partial description to derive valid inequalities.}

\revised{
Before we give the description in the original space, we define a set of feasible coefficients used to derive the inequalities. Let
$$\mathcal Y \defeq \bigg \{y\in \R_+^{m_1}\times \R^{m-m_1}: \sum_{i=1}^m \Gamma_iy_i\succeq 0,\;\sum_{i=1}^m \text{Tr}(\Gamma_i)y_i\leq 1 \bigg \}.$$

\begin{theorem}\label{thm:projection}
	If $Q\succ 0$, point $(x,z,t)\in \clconv(X)$ if and only if $z\in \conv(Z)$, $t\geq 0$ and
	\begin{align}
	 t\geq \;&\frac{x^\top \left(\sum_{i=1}^m \Gamma_iy_i\right)x}{y^\top \beta+\left(\sum_{i=1}^my_i\gamma_i\right)^\top z}, & \forall y\in \mathcal Y, \label{eq:semiinfSocp}
	\end{align}
 or equivalently,
	\begin{align}
	 t\geq \max_{y\in \mathcal Y}\;&\frac{x^\top \left(\sum_{i=1}^m \Gamma_iy_i\right)x}{y^\top \beta+\left(\sum_{i=1}^my_i\gamma_i\right)^\top z} \cdot \label{eq:infSocp-c}
	\end{align}
\end{theorem}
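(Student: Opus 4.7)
The plan is to start from the extended-space characterization in Theorem~\ref{thm:canonical} and project the auxiliary matrix variable $W$ out using conic duality. Formulations \eqref{eq:semiinfSocp} and \eqref{eq:infSocp-c} are equivalent rewritings of the same statement, so I would focus on the ``for all $y\in\mathcal Y$'' form.

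For the ``only if'' (validity) direction, I would aggregate the constraints defining $P$ with multipliers $y\in\mathcal Y$. Since $y_i\ge 0$ for $i\le m_1$ preserves the direction of the inequality constraints while the remaining $y_i$ multiply equalities, aggregation yields $\langle G,W\rangle\le y^\top\beta+(\sum_i y_i\gamma_i)^\top z$, where $G:=\sum_i y_i\Gamma_i\succeq 0$ by the definition of $\mathcal Y$. On the other side, the LMI \eqref{eq:miqoSDP_psd} gives $W-xx^\top/t\succeq 0$ via Schur complement when $t>0$, so $\langle G,W\rangle\ge x^\top Gx/t$ by nonnegativity of the inner product of two PSD matrices. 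Chaining the two bounds produces exactly inequality \eqref{eq:semiinfSocp}. The trace normalization $\sum_i\text{Tr}(\Gamma_i)y_i\le 1$ is scale-invariant for the ratio in \eqref{eq:semiinfSocp} and simply bounds the set of multipliers without changing the generated family of inequalities.

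For the ``if'' (completeness) direction, I would invoke a mixed LP--SDP theorem-of-alternatives. Fix $(x,z,t)$ with $z\in\conv(Z)$, $t>0$, and suppose \eqref{eq:semiinfSocp} holds for every $y\in\mathcal Y$. Substituting $\tilde W=W-xx^\top/t$ converts the question ``does there exist $W$ with $(z,W)\in P$ and \eqref{eq:miqoSDP_psd}?'' into an SDP feasibility problem of finding $\tilde W\succeq 0$ satisfying $\langle\Gamma_i,\tilde W\rangle\le \beta_i+\gamma_i^\top z-x^\top\Gamma_i x/t$ for $i\le m_1$ with the analogous equalities for $i>m_1$. By SDP Farkas, infeasibility is certified by multipliers $y$ with $y_i\ge 0$ for $i\le m_1$, $\sum_i y_i\Gamma_i\succeq 0$, and $y^\top\beta+(\sum_i y_i\gamma_i)^\top z<x^\top(\sum_i y_i\Gamma_i)x/t$. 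Rescaling such $y$ to satisfy the trace normalization places it in $\mathcal Y$ and contradicts \eqref{eq:semiinfSocp}, so a feasible $W$ must exist, placing $(x,z,t)\in\clconv(X)$ via Theorem~\ref{thm:canonical}. The boundary case $t=0$ is handled separately: the LMI forces $x=0$, every numerator in \eqref{eq:semiinfSocp} vanishes, and feasibility reduces to $z\in\conv(Z)$, which equals the projection of $P$ onto the $z$-coordinates.

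The main obstacle will be ensuring the SDP Farkas alternative is sharp, since SDP strong duality can fail absent a Slater point. I plan to leverage the compactness of $P$ and the fact that $\clconv(X)$ is closed by definition: any point at which the Farkas certificate is attained only in the limit still lies in the closure of the feasibility region, which is exactly the object we aim to describe. A secondary subtlety arises when the denominator $y^\top\beta+(\sum_i y_i\gamma_i)^\top z$ vanishes for some $y\in\mathcal Y$; one must then read \eqref{eq:semiinfSocp} in the multiplicative form $t\cdot\text{denom}\ge x^\top Gx$ and verify that in that case $Gx=0$ follows from the structure of $P$, making the inequality trivially satisfied.
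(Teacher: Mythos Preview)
Your plan is essentially the same approach as the paper's: project out $W$ from the extended formulation of Theorem~\ref{thm:canonical} via conic duality. The paper, however, packages both directions into a single primal--dual pair rather than treating validity and completeness separately. It tests membership of $(x,z,t)$ by asking whether the optimal value of
\[
\min_{W,\lambda}\ \lambda \quad \text{s.t.}\quad (z,W)\in P,\ W-xx^\top/t+\lambda I\succeq 0,\ \lambda\ge 0
\]
is nonpositive, substitutes $V=W-xx^\top/t+\lambda I$, and dualizes.

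The auxiliary variable $\lambda$ is the key device that resolves exactly the obstacle you flag: because $\lambda$ can always be increased to make the semidefinite constraint strict, the primal automatically has a Slater point (together with a relative-interior choice of $W$ for the linear part), so SDP strong duality holds without any limiting or closure argument. This also explains the trace bound $\sum_i\text{Tr}(\Gamma_i)y_i\le 1$ naturally as the dual constraint associated with $\lambda$, rather than as an ad hoc normalization. Your separate Farkas route is correct in spirit, but the paper's formulation sidesteps the weak-infeasibility pathology you were worried about and yields the multiplicative form $t\cdot(\text{denom})\ge x^\top G x$ directly, handling the $t=0$ and zero-denominator cases in one stroke.
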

 }
 
\begin{proof}
	A point $(x,z,t)\in\clconv(X)$ if and only if
	\begin{align*}
	0\geq \min_{W,\lambda} \;&\lambda\\
	\text{s.t.}\;&\langle \Gamma_i,W \rangle\leq \beta_i+\gamma_i^\top z, \ \  i=1,\dots,m_1\\
	&\langle \Gamma_i,W \rangle = \beta_i+\gamma_i^\top z, \ \  i=m_1+1,\dots,m\\
	&W-xx^\top/t+\lambda I\succeq 0, \ \lambda\geq 0.  
	\end{align*}
	Strong duality holds since there exists $(z,W)\in P$ that satisfies the facet-defining inequalities strictly, and we can always increase $\lambda$ to find a strictly feasible solution to the above minimization problem.
	Substituting $V=W-xx^\top/t+\lambda I$, the optimization problem simplifies to 
	\begin{subequations}
	\begin{align*}
	0\geq \min_{V,\lambda} \;&\lambda\\
	\text{s.t.}\;&-\langle \Gamma_i,V \rangle+\lambda\text{Tr}(\Gamma_i)\geq -\beta_i-\gamma_i^\top z+\langle \Gamma_i,xx^\top /t\rangle, \ i=1,\dots,m_1\tag{$y_i$}\\
	&-\langle \Gamma_i,V \rangle+\lambda\text{Tr}(\Gamma_i)= -\beta_i-\gamma_i^\top z+\langle \Gamma_i,xx^\top /t\rangle, \ i=m_1+1,\dots,m\tag{$y_i$}\\	
	&V\succeq 0, \   \lambda\geq 0.
	\end{align*}
	\end{subequations}
		Letting $y\in  \R_+^{m_1}\times \R^{m-m_1}$ denote the dual variables, we find the equivalent representation
	\begin{subequations}
		\begin{align}
		0\geq \max_{y\in  \R_+^{m_1}\times \R^{m-m_1}}\;&\sum_{i=1}^m y_i\left(-\beta_i-\gamma_i^\top z+\langle \Gamma_i, xx^\top/t\rangle\right)\label{eq:dual_obj}\\
		\text{s.t.}\;&-\sum_{i=1}^m y_i\Gamma_i\preceq 0,\; \sum_{i=1}^m\text{Tr}(\Gamma_i)y_i\leq 1.   \label{eq:dual_constr}
		\end{align}
	\end{subequations}
	In particular, inequality \eqref{eq:dual_obj} is valid for any fixed feasible $y$. Multiplying both sides of the inequality by $t$, we find the equivalent conic quadratic representation
	\begin{equation}\label{eq:socp}t\left(y^\top \beta+\left(\sum_{i=1}^my_i\gamma_i\right)^\top z\right)\geq \langle \sum_{i=1}^m y_i\Gamma_i,xx^\top\rangle.\end{equation}
	Note that validity of inequalities \eqref{eq:socp} implies that $y^\top \beta+\left(\sum_{i=1}^my_i\gamma_i\right)^\top z\geq 0$ for any primal feasible $z$ and dual feasible $y$;
	dividing  both sides of the inequality by $y^\top \beta+\left(\sum_{i=1}^my_i\gamma_i\right)^\top z$, the theorem is proven. 
\end{proof}

Note that even if inequalities \eqref{eq:facets} are not facet-defining or are insufficient to describe $P$, the corresponding inequalities \eqref{eq:infSocp-c} are still  valid  for $\clconv(X)$.

\revised{
We also state the analogous result for low-rank matrices, without proof, where $(\Gamma_i,\gamma_i,\beta_i), i\in [m]$ defines $P_F$.
\begin{theorem}\label{thm:separable-orig}
Let $Q = F F^{\top}$, where $F \in \R^{n \times k}$ is a full-column rank matrix satisfying $\mathrm{col}(F) = \bigcap_{\hat \ones_{S} \in Z} \pi_S^{-1}(\mathrm{col}(F_S))$. 
	Then point $(x,z,t)\in \clconv(X)$ if and only if $z\in \conv(Z)$, $t\geq 0$ and
	\begin{align}
	 t\geq \;&\frac{x^\top F \left(\sum_{i=1}^m \Gamma_iy_i\right)  F^\top x}{y^\top \beta+\left(\sum_{i=1}^my_i\gamma_i\right)^\top z}, & \forall y\in \mathcal Y, \label{eq:semiinfSocp}
	\end{align}
 or equivalently,
	\begin{align}
	 t\geq \max_{y\in \mathcal Y}\;&\frac{x^\top F \left(\sum_{i=1}^m \Gamma_iy_i\right)F^{\top}x}{y^\top \beta+\left(\sum_{i=1}^my_i\gamma_i\right)^\top z} \cdot \label{eq:infSocp-c}
	\end{align}
\end{theorem}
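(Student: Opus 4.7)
The plan is to replicate the projection-via-duality argument used in the proof of Theorem~\ref{thm:projection}, starting instead from the extended formulation of Theorem~\ref{thm:separable}. Since Theorem~\ref{thm:separable} guarantees (under the given assumptions on $F$ and $Z$) that $\clconv(X)$ equals the projection onto $(x,z,t)$ of the set defined by \eqref{eq:miqoSDPSep2_psd}--\eqref{eq:miqoSDPSep2_polytope}, the task reduces to characterizing, for each fixed $(x,z,t)$ with $z\in\conv(Z)$ and $t\geq 0$, when there exists $W\in\R^{k\times k}$ satisfying both constraints. I would encode this question as the feasibility SDP of minimizing $\lambda$ subject to $\langle \Gamma_i,W\rangle \leq (=) \beta_i+\gamma_i^\top z$ (according to the inequality/equality split of $P_F$) together with $W - (F^\top x)(F^\top x)^\top/t + \lambda I \succeq 0$ and $\lambda \geq 0$, so that $(x,z,t)\in\clconv(X)$ iff this optimum is $\leq 0$.

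Next, I would apply the substitution $V = W - (F^\top x)(F^\top x)^\top / t + \lambda I$ and take the Lagrangian dual with multipliers $y\in \R_+^{m_1} \times \R^{m-m_1}$, exactly as in the proof of Theorem~\ref{thm:projection}. The matrices $\Gamma_i$ now live in $\R^{k\times k}$, but the dual is structurally identical: its feasibility constraints $\sum_i y_i \Gamma_i \succeq 0$ and $\sum_i \text{Tr}(\Gamma_i) y_i \leq 1$ are precisely those defining $\mathcal Y$. Strong duality holds by Slater's condition, since any point of $P_F$ strictly satisfying the inequalities \eqref{eq:facets}, combined with sufficiently large $\lambda$, yields a strictly feasible primal. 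The critical algebraic step is the identity $\langle \Gamma_i, F^\top x x^\top F\rangle = x^\top F \Gamma_i F^\top x$, which upon summation produces $x^\top F\bigl(\sum_i y_i \Gamma_i\bigr) F^\top x$ as the numerator of the resulting conic-quadratic inequalities. Requiring the dual optimum to be $\leq 0$ for every $y\in\mathcal Y$, multiplying by $t$, and dividing by the nonnegative quantity $y^\top \beta + (\sum_i y_i \gamma_i)^\top z$ (nonnegativity inherited from the same validity argument as in Theorem~\ref{thm:projection}) then yields \eqref{eq:semiinfSocp}, with \eqref{eq:infSocp-c} following by taking the maximum over $\mathcal Y$.

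Two technical points will require care. First, the substitution defining $V$ degenerates when $t=0$, so that boundary case should be handled by a limiting argument: closedness of $\clconv(X)$ combined with a homogeneity-type analysis as $t\downarrow 0$ should extend the characterization to the $t=0$ facet. Second, because $F$ may be rank-deficient when viewed as a linear map on $\R^n$, one must ensure that the derived inequalities still encode the complementarity condition $x\circ(\ones-z)=0$ correctly; this, however, is already built into the starting extended formulation through Theorem~\ref{thm:separable} and the technical hypothesis $\mathrm{col}(F)=\bigcap_{\hat\ones_S\in Z}\pi_S^{-1}(\mathrm{col}(F_S))$, so no new work is needed at the projection step. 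The main obstacle I expect is verifying strong duality rigorously in this mixed SDP--LP setting and cleanly discharging the $t=0$ boundary, both of which should be routine adaptations of the arguments already deployed in Theorem~\ref{thm:projection}.
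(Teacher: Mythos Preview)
Your proposal is correct and matches the paper's approach exactly: the paper states Theorem~\ref{thm:separable-orig} \emph{without proof}, remarking only that it is the analogue of Theorem~\ref{thm:projection} with $(\Gamma_i,\gamma_i,\beta_i)$ now describing $P_F$, and your plan to rerun the projection-via-duality argument of Theorem~\ref{thm:projection} starting from the extended formulation of Theorem~\ref{thm:separable} (with $F^\top x$ in place of $x$ and $k\times k$ matrices $\Gamma_i$) is precisely that intended argument.
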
}

We now  illustrate Theorem~\ref{thm:projection} for the set $X_{2\times 2}$ discussed in \S\ref{sec:2x2}. 

\begin{example}[Description of $\clconv(X_{2\times 2})$ in the original space]\label{ex:2x2}
	From Proposition~\ref{prop:2x2extended}, we find that for $X_{2\times 2}$, a minimal description of  polyhedron $P$ is given  by the 
	bound constraints $0\leq z\leq 1$ and 
	\begin{align*} \small
	\left\langle\begin{pmatrix}1 & -1/(2d_1)\\-1/(2d_1)&0\end{pmatrix},W\right\rangle -(1/d_1)z_1&=0\tag{$y_1$}\\ \small
	\left\langle\begin{pmatrix}0 & -1/(2d_2)\\-1/(2d_2)&1\end{pmatrix},W\right\rangle -(1/d_2)z_2&=0\tag{$y_2$}\\ \small
	\left\langle\begin{pmatrix}0 & -1/2\\-1/2&0\end{pmatrix},W\right\rangle&\leq 0\tag{$y_3$}\\ \small
	\left\langle\begin{pmatrix}0 & -1/2\\-1/2&0\end{pmatrix},W\right\rangle+(1/\Delta)z_1+(1/\Delta)z_2&\leq 1/\Delta\tag{$y_4$}\\ \small
	\left\langle\begin{pmatrix}0 & 1/2\\1/2&0\end{pmatrix},W\right\rangle-(1/\Delta)z_1&\leq 0\tag{$y_5$}\\
	\small
	\left\langle\begin{pmatrix}0 & 1/2\\1/2&0\end{pmatrix},W\right\rangle-(1/\Delta)z_2&\leq 0.\tag{$y_6$}\\
	\end{align*}
	\end{example}
	
	Then, an application of Theorem~\ref{thm:projection} yields the inequality
	\begin{subequations}
	\label{eq:original2x2}
	\begin{align}
	t\geq\max_{y\in \R_+^6}\;&\frac{y_1x_1^2+y_2x_2^2+(-y_1/d_1-y_2/d_2-y_3-y_4+y_5+y_6)x_1x_2}{(1/\Delta)y_4+(y_1/d_1-y_4/\Delta+y_5/\Delta)z_1+(y_2/d_2-y_4/\Delta+y_6/\Delta)z_2}\\
	\text{s.t.}\;&4y_1y_2\geq (-y_1/d_1-y_2/d_2-y_3-y_4+y_5+y_6)^2,\; y_1+y_2\leq 1.
	\end{align}
	\end{subequations}
Note that variables $y_1,y_2$ are originally free as dual variables for equality constraints, however, the nonnegativity constraints are imposed due to the positive definiteness constraint in $\mathcal Y$. 
In Appendix~\ref{app:validity} we provide an independent verification that inequality \eqref{eq:original2x2} is indeed valid, and reduces to the quadratic inequality $t\geq d_1x_1^2+d_2x_2^2-2x_1x_2$ at integral $z$.\qed

From Theorem~\ref{thm:projection}, we see that $\clconv(X)$ can be described by an infinite number of fractional quadratic/affine inequalities \eqref{eq:infSocp-c}.  More importantly, the convex hull is finitely generated: the infinite number of quadratic and affine functions are obtained from conic combinations of a \emph{finite} number of base matrices $\Gamma_i$ and vectors $(\gamma_i,\beta_i)$, which correspond precisely to the minimal description of $P$. \revised{To solve the resulting semi-infinite problem in practice, one can employ a delayed cut generation scheme, where at each iteration, the problem with a subset of  inequalities  \eqref{eq:semiinfSocp} is solved to obtain $(\bar x,\bar z)$. Then, the separation problem  to find a maximum violated inequality (i.e., $y$) at $(\bar t, \bar x,\bar z)$, if it exists,  is a convex optimization problem given by the inner maximization problem in \eqref{eq:infSocp-c}.}

\begin{example}[Rank-one function with constraints] Given $Z\subseteq \revised{\{0,1\}^{n}}$, consider the set 
	$$X_{R1}^Z=\left\{(x,z,t)\in \R^n\times Z\times \R: t\geq \left(h^\top x\right)^2,\; x\circ (\ones-z)=0\right\},$$ that is, a rank-one function with arbitrary constraints on the indicator variables $z$ defined by $Z$. As discussed in the proof of Proposition~\ref{prop:rankOne}, $P_F\subseteq \R^{n+1}$ with one additional variable $W \in \R^1$ which, at integer points, is given by $W=\max\{z_1,\dots,z_n\}$.  For simplicity, assume that $0\in Z$, and that both $\conv(Z)$ and $\conv(Z\setminus \{0\})$ are full-dimensional. Finally, consider all facet-defining inequalities of $\conv(Z\setminus \{0\})$ of the form $\gamma_i^\top z\geq 1$ (that is, inequalities that cut off point $0$), for $i=1,\dots,m$. Now consider inequalities
	\begin{equation}
	\label{eq:facetDefiningZ}
	W\leq \gamma_i^\top z,\qquad \forall i\in [m].
	\end{equation}
First, observe that inequalities \eqref{eq:facetDefiningZ} are valid for $P_F$: given $z\in Z$, if $z=0$, then $W=0$; otherwise, $z\in Z\setminus\{0\}\implies \gamma_i^\top z\geq 1=W$. Second, note that inequalities \eqref{eq:facetDefiningZ} are facet-defining for $P_F$. Indeed, given $i\in [m]$, consider the face $Z_i=\{z\in \conv(Z\setminus \{0\}): \gamma_i^\top z=1\}$ of $\conv(Z\setminus\{0\})$: since $\conv(Z\setminus\{0\})$ is full-dimensional and $\gamma_i^\top z\geq 1$ is facet-defining, there are $n$ affinely independent points $\{z^j\}_{j=1}^n$  such that $z^j\in Z_i$. Thus, we find that points $(z^j,1)_{j=1}^n$ and $(0,0)$ are ($n+1$)-affinely independent points satisfying \eqref{eq:facetDefiningZ} at equality. Moreover, one can easily verify that inequality $W\leq 1$ is facet-defining as well. Thus, from \eqref{eq:infSocp-c} (adapted to the factorable representation discussed in \S\ref{sec:separable}), we conclude that the inequality
\begin{align}\label{eq:r1solving}
\displaystyle t\geq \max_{y\in \R_+^{m+1}}\;& \left \{ \frac{\left(\sum_{i=0}^m y_i\right)(h^\top x)^2}{y_0+\sum_{i=1}^my_i(\gamma_i^\top z)} \ \
\text{s.t. }\;\sum_{i=0}^m y_i\leq 1 \right \}
\end{align}
is valid for $\clconv(X_{R1}^Z)$. Moreover, an optimal solution to optimization problem \eqref{eq:r1solving} corresponds to setting $y_i=1$ for  $i\in\arg\min_{i\in [m]}\{\gamma_i^\top z\}$, and we conclude that inequalities $t\geq (h^\top x)^2$ and $t\geq (h^\top x)^2/(\gamma_i^\top z), i\in [m]$ are valid for $\clconv(X_{R1}^Z)$. Indeed, as shown in \cite{wei2021ideal}, these inequalities along with $z\in\conv(Z)$ fully describe $\clconv(X_{R1}^Z)$ (when a nondecomposability condition holds).  \qed
\end{example}

\subsection*{Connection with decomposition methods}

From  \revised{Theorem ~\ref{thm:projection}}, we see that the convex hull, \revised{$X$,} is obtained by  \revised{adding conic quadratic inequalities  $ t \ge \frac{x^\top \left(\sum_{i=1}^m \Gamma_iy_i\right)x}{y^\top \beta+\left(\sum_{i=1}^my_i\gamma_i\right)^\top z}$ with simpler quadratic structure $x^{\top} \Gamma_i x$} (corresponding to inequalities describing $P$). In particular, the intuition is similar to convexifications obtained from decompositions \eqref{eq:decomp}. We now show how the theory presented in this paper sheds light on the strength of the aforementioned decompositions. 

Suppose inequalities \eqref{eq:facets}, which we repeat for convenience: 
\begin{equation}\langle \Gamma_i,W \rangle-\gamma_i^\top z\leq \beta_i, \quad i=1,\dots,m,\label{eq:facetsRepeated}\end{equation} are valid for $P$ and, additionally, $\Gamma_i\succeq 0$ for all $i\in [m]$. Since $P$ is not full-dimensional in general, positive semidefiniteness conditions may not be as restrictive as they initially seem.

\begin{example}[Description of $\clconv(X_{2\times 2})$, continued]\label{ex:psd} None of the matrices in the facets of $P$ for $\clconv(X_{2\times 2})$ given in Example~\ref{ex:2x2} are positive semidefinite. Nonetheless, the inequalities below also describe $P$ (we abuse notation and encode using variables $y$ how each inequality is obtained):
	\begin{align*} \small
	\left\langle\begin{pmatrix}1 & -1/d_1\\-1/d_1&d_2/d_1\end{pmatrix},W\right\rangle -(1/d_1)(z_1+z_2)&=0\tag{$y_1+(d_2/d_1)y_2$}\\ \small
	\left\langle\begin{pmatrix}d_1/d_2 & -1/d_2\\-1/d_2&1\end{pmatrix},W\right\rangle -(1/d_2)(z_1+z_2)&=0\tag{$y_2+(d_1/d_2)y_1$}\\ \small
	\left\langle\begin{pmatrix}d_1/2 & -1\\-1&d_2/2\end{pmatrix},W\right\rangle-(1/2)(z_1+z_2)&\leq 0\tag{$y_3+(d_1/2)y_1+(d_2/2)y_2$}\\ \small
	\left\langle\begin{pmatrix}d_1/2 & -1\\-1&d_2/2\end{pmatrix},W\right\rangle+(1/\Delta-1/2)z_1+(1/\Delta-1/2)z_2&\leq 1/\Delta\tag{$y_4+(d_1/2)y_1+(d_2/2)y_2$}\\ \small
	\left\langle\begin{pmatrix}d_1 & 0\\0&0\end{pmatrix},W\right\rangle-(d_1d_2/\Delta)z_1&\leq 0\tag{$y_5+d_1y_1$}\\ \small
	\left\langle\begin{pmatrix}0 & 0\\0&d_2\end{pmatrix},W\right\rangle-(d_1d_2/\Delta)z_2&\leq 0\tag{$y_6+d_2y_2$}\\
	\end{align*}
In particular, the last two inequalities satisfy positive semidefiniteness. Moreover, the relaxation of the first two equalities obtained by replacing them with  inequalities also satisfies positive semidefiniteness. Finally, if $Q$ is sufficiently diagonally dominant and $d_1d_2\geq 4$, then the third and fourth inequalities satisfy positive semidefiniteness as well.
\qed
\end{example}

Now suppose that in \eqref{eq:infSocp-c}, we fix $y_i=\lambda/(\beta_i+\gamma_i^\top z)$, where $\lambda$ is small enough to ensure that constraint $\sum_{i=1}^m\tr(\Gamma_i)y_i\leq 1$ is satisfied. Then inequality \eqref{eq:infSocp-c} reduces to
$$mt\geq \sum_{i=1}^m \frac{x^\top \Gamma_i x}{\beta_i+\gamma_i^\top z},$$
which is precisely the relaxations obtained from \eqref{eq:decomp}. We make the following two important observations.

\vskip 1mm
\paragraph{\textit{Observation 1}} Relaxations obtained by fixing a given decomposition \eqref{eq:decomp} \cite{Frangioni2007,frangioni2018decompositions} are, in general, \emph{insufficient} to describe $\clconv(X)$. Indeed, from Theorem~\ref{thm:projection}, describing $\clconv(X)$ requires one inequality per extreme point of the region $\mathcal Y$,
whereas a given decomposition corresponds to a single point in this region. 

\vskip 1mm
\paragraph{\textit{Observation 2}} On the other hand, the strong ``optimal" or ``dynamic" relaxations \cite{zheng2014improving,dong2015regularization,atamturk2019rank}, where the decomposition is not fixed but instead is chosen dynamically, are \emph{excessive} to describe $\clconv(X)$. Indeed, they are of the form \eqref{eq:infSocp-c} for every possible (rank-one, $2\times2$, remainder) matrix, and are not finitely generated\revised{; whereas, our results imply that the necessary inequalities are finitely generated.} 

We conclude this section with an analysis of rank-one decompositions, where we assume for simplicity that $Q\succ 0$: given a subset $\mathcal{T}\subseteq 2^{[n]}$, rank-one relaxations are given by
\begin{equation}t\geq\sum_{T\in \mathcal{T}}\frac{(\hat h_T^\top x)^2}{\hat\ones_T^\top z }+x^\top Rx,\label{eq:rankOneDecomp}\end{equation}
where $R=Q-\sum_{T\in \mathcal{T}}\hat h_T\hat h_T^\top\succeq 0$, and $\hat h_T\in \R^n$ are given vectors that are zero in entries not indexed by $T$. Relaxation \eqref{eq:rankOneDecomp} can \revised{be} interpreted as a decomposition obtained from valid inequalities for $P$ of the form
\begin{equation}\label{eq:validRankone}\langle\hat h_T\hat h_T^\top,W \rangle\leq \gamma\hat\ones_T^\top z,\end{equation}
where $\gamma\geq 0$.
Note that inequality \eqref{eq:validRankone} is valid for $P$ if \begin{equation}\label{eq:ub}\gamma\geq \max_{\hat \ones_{S}\in Z} \frac{1}{|S \bigcap T|}\langle \hat h_T\hat h_T^\top, \hat Q_S^{-1} \rangle.\end{equation} 
\begin{proposition}
	 If $\gamma= \max_{\hat \ones_{S}\in Z} \frac{1}{|S \bigcap T|} \langle \hat h_T\hat h_T^\top, \hat Q_S^{-1} \rangle$, then inequality \eqref{eq:validRankone} defines a face of $P$ of dimension at least $\mathrm{dim}(P_0)+1$, where $$P_0=\left\{(z,W)\in P: z_T=0\text{ and } W_T=0\right\}.$$
\end{proposition}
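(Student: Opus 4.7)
The strategy is to first identify enough extreme points of $P$ that satisfy \eqref{eq:validRankone} at equality, and then check affine independence via the $z_T$ coordinates.

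First I would examine $P_0$. Since $P$ is the convex hull of the integral points $(\hat \ones_S, \hat Q_S^{-1})$ with $\hat \ones_S \in Z$, any such extreme point lies in $P_0$ if and only if $S\cap T=\emptyset$: indeed, $z_T=(\hat \ones_S)_T=0$ forces $S\cap T=\emptyset$, and conversely if $S\cap T=\emptyset$ then the $T\times T$ block of $\hat Q_S^{-1}$ is zero because $\hat Q_S^{-1}$ vanishes outside the $S\times S$ block. Therefore $P_0=\operatorname{conv}\{(\hat \ones_S,\hat Q_S^{-1}) : \hat \ones_S\in Z,\ S\cap T=\emptyset\}$.

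Next, I would show every point of $P_0$ lies on the face defined by \eqref{eq:validRankone}. For an extreme point with $S\cap T=\emptyset$, the RHS equals $\gamma\,\hat \ones_T^\top \hat \ones_S=0$. For the LHS, $\hat h_T$ is supported on $T$ and $\hat Q_S^{-1}$ is supported on $S\times S$, so $\hat h_T^\top \hat Q_S^{-1}\hat h_T=0$. Convexity then gives that all of $P_0$ satisfies \eqref{eq:validRankone} at equality.

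Then I would produce one more tight point outside $P_0$. By definition of $\gamma$, there exists $S^\ast$ with $\hat \ones_{S^\ast}\in Z$ and $|S^\ast\cap T|\ge 1$ achieving the max, i.e.
\[
\langle \hat h_T\hat h_T^\top,\hat Q_{S^\ast}^{-1}\rangle=\gamma |S^\ast\cap T|=\gamma\,\hat \ones_T^\top\hat \ones_{S^\ast}.
\]
So the extreme point $v^\ast:=(\hat \ones_{S^\ast},\hat Q_{S^\ast}^{-1})$ of $P$ lies on the face, and since its $z_T$ component is nonzero, $v^\ast\notin P_0$.

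Finally, I would conclude the dimension bound by affine independence. Take any $\dim(P_0)+1$ affinely independent points $v_0,\dots,v_{\dim(P_0)}\in P_0$; each has $z_T=0$, so any affine combination of them has $z_T=0$. Since $v^\ast$ has $z_T\ne 0$, it cannot be written as an affine combination of $v_0,\dots,v_{\dim(P_0)}$, giving $\dim(P_0)+2$ affinely independent points all lying on the face \eqref{eq:validRankone}. Hence the face has dimension at least $\dim(P_0)+1$. The only subtle point is handling $0/0$ in the definition of $\gamma$, which is avoided because the maximum is attained at some $S^\ast$ with $|S^\ast\cap T|\ge 1$ whenever the inequality is nontrivial; if it is trivial (e.g., $\hat h_T=0$ on all relevant $S$), then $\gamma=0$ and the inequality reduces to $0\le 0$, trivially defining $P$ itself as its ``face.''
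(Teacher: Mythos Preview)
Your proposal is correct and follows essentially the same approach as the paper: take $\dim(P_0)+1$ affinely independent points in $P_0$ (all tight because $z_T=0$ and $W_T=0$ kill both sides of \eqref{eq:validRankone}), then adjoin the maximizer $(\hat\ones_{S^\ast},\hat Q_{S^\ast}^{-1})$, which is tight by definition of $\gamma$ and affinely independent since its $z_T$ block is nonzero. The paper's proof is just a two-sentence version of exactly this; your extra detour through the extreme-point description of $P_0$ is not needed (tightness on $P_0$ follows directly from $z_T=0,\ W_T=0$), but it does no harm.
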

\begin{proof}
There are $\text{dim}(P_0)+1$ affinely independent points in $P_0$, and all satisfy \eqref{eq:validRankone} at equality. Letting $S^*\in \argmax_{\hat \ones_{S}\in Z} \frac{1}{|S \bigcap T|} \langle \hat h_T\hat h_T^\top, \hat Q_S^{-1} \rangle$, we find that $(\hat \ones_{S^*}, \hat Q_{S^*}^{-1})$ is an additional affinely independent point satisfying \eqref{eq:validRankone} at equality.  
\end{proof}
Note that if optimization problem \eqref{eq:ub} has multiple optimal solutions, then one can find additional affinely independent points.
In particular, \eqref{eq:validRankone} is guaranteed to define a high dimensional face of $P$ if $|T|$ is small. Indeed, inequalities \eqref{eq:validRankone} were found to be particularly effective computationally if $\mathcal{T}=\left\{T\subseteq [n]: |T|\leq \kappa\right\}$ for some small $\kappa$ \cite{atamturk2019rank}, although a theoretical justification of this observation has been missing until now.

	\begin{remark}\revised{[Description of $\clconv(X_{2\times 2})$, continued]} Consider again the facet-defining inequalities given in Example~\ref{ex:psd}. The last two inequalities correspond to a rank-one strengthening with $|T|=1$, which leads to relaxations of $X_{2\times 2}$ similar to the perspective relaxation. Thus, we may argue that the perspective relaxation is required to describe $\clconv(X_{2\times 2})$. \qed
    \end{remark}

\revised{
\section{A Mixed-integer Linear Formulation for $P$}\label{sec:MILP}

The polyhedron $P$ can (in theory) be studied using standard methods from mixed-integer linear optimization. However, the vertex representation of $P$ is often not convenient, as most techniques require that the polyhedron be described explicitly via linear inequalities. Thus, in this section, we  present such a mixed-integer \textit{linear} formulation for the vertices of polytope $P$ when the Hessian matrix $Q$ is positive definite. 

First, we describe the linear equalities necessary for $P$. Throughout this section, for ease of exposition, for a given $S\subseteq[n]$, we permute the rows and columns of $Q$ such that indices in $S$ appear first. 

\begin{proposition}\label{lem:valideq}
For any $(z, W) \in P$,
\begin{equation}\label{eq:valideq}
    \sum_{k} Q_{ik} W_{ki} = z_i, \quad \forall i \in [n]. 
\end{equation}
\end{proposition}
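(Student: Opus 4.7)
The plan is to exploit the fact that both sides of \eqref{eq:valideq} are linear (actually affine) functions of $(z,W)$. Since $P$ is defined as the convex hull of the points $\{(\hat\ones_S, \hat Q_S^{-1})\}_{\hat\ones_S \in Z}$, it suffices to verify the identity at each such extreme generator; the general case then follows by taking convex combinations.

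Fix a feasible $\hat\ones_S \in Z$ and consider the point $(z,W) = (\hat\ones_S, \hat Q_S^{-1})$. Recall from Section~\ref{sec:preliminaries} that $\hat Q_S^{-1}$ has its $S \times S$ block equal to $Q_S^{-1}$ and is zero elsewhere. The plan is to split into the cases $i \in S$ and $i \notin S$.

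For $i \in S$, I would observe that $(\hat Q_S^{-1})_{ki}$ vanishes unless $k \in S$, so the sum collapses to $\sum_{k \in S} Q_{ik}(Q_S^{-1})_{ki} = \sum_{k \in S}(Q_S)_{ik}(Q_S^{-1})_{ki} = (Q_S Q_S^{-1})_{ii} = 1 = (\hat\ones_S)_i$, using symmetry of $Q_S^{-1}$ together with the fact that the rows and columns of the $S\times S$ submatrix are precisely those indexed by $S$. For $i \notin S$, every entry $(\hat Q_S^{-1})_{ki}$ is zero by the structure of $\hat Q_S^{-1}$, so the sum is $0 = (\hat\ones_S)_i$.

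There is no serious obstacle here: the only subtlety is keeping track of which index set the submatrix $Q_S^{-1}$ lives on versus the ambient $n\times n$ embedding $\hat Q_S^{-1}$, which the permutation convention stated just before the proposition is designed to handle. After both cases are checked, a single sentence invoking linearity and the definition of $P$ as a convex hull completes the argument.
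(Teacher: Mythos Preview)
Your proposal is correct and follows essentially the same approach as the paper: both verify the identity at each extreme generator $(\hat\ones_S,\hat Q_S^{-1})$ and then extend to all of $P$ by linearity. The only cosmetic difference is that the paper packages the two cases into a single block-matrix computation $\hat Q_S^{-1}Q=\begin{pmatrix}I_{|S|}&Q_S^{-1}Q_{S,[n]\setminus S}\\0&0\end{pmatrix}$ and reads off the diagonal, whereas you unpack the sum index by index; the content is the same.
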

\begin{proof}
For any $S \subseteq [n]$, $(\hat \ones_{S}, \hat Q_{S}^{-1}) \in P$, we have \begin{equation}\label{eq:mproduct}
\hat Q_{S}^{-1}Q = \begin{pmatrix}
Q_S^{-1} & 0 \\
0 & 0 
\end{pmatrix}
\begin{pmatrix}
Q_{S} & Q_{S, [n] \backslash S} \\
Q_{S, [n] \backslash S}^{\top} & Q_{[n] \backslash S}
\end{pmatrix}
=
\begin{pmatrix}
I_{|S|} & Q_{S}^{-1} Q_{S, [n] \backslash S} \\
0 & 0 
\end{pmatrix}.\end{equation}
Observe that the $i^{th}$ diagonal entry of $\hat Q_{S}^{-1} Q$ is one if $i \in S$ and zero otherwise. Since at all extreme points of $P$ we have $z=\hat \ones_S$ and $W=\hat Q_S^{-1}$ for some $S\subseteq [n]$, it follows that $(WQ)_{ii}=(\hat Q_{S}^{-1} Q)_{ii}=z_i$.
\end{proof}

Since $P$ satisfies $n$ linearly independent equalities, we immediately get insights into the dimension of $P$.
\begin{corollary}\label{cor:dimension}
The dimension of $P$ satisfies $\text{dim}(P)\leq n(n+1)/2$. If $Q$ is dense and $Z=\{0,1\}^n$, then this bound is tight.
\end{corollary}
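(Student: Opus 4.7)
The plan is to derive the upper bound by counting linearly independent affine equalities that vanish on $P$, and to establish tightness by showing that under the given hypotheses every such equality is a linear combination of the ones already identified.

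For the upper bound, I would first observe that every extreme point $(\hat\ones_S,\hat Q_S^{-1})$ of $P$ has a symmetric $W$-component, so the equalities $W_{ij}=W_{ji}$ for $1\leq i<j\leq n$ hold throughout $P$, contributing $n(n-1)/2$ affine equations. Proposition~\ref{lem:valideq} supplies $n$ further equations $\sum_k Q_{ik}W_{ki}=z_i$. These are linearly independent from the symmetry equations and from each other: the symmetry equations have no $z$-component, while the Proposition~\ref{lem:valideq} equations have linearly independent $z$-parts $-z_1,\ldots,-z_n$. Subtracting the total count of $n(n+1)/2$ from $\dim(\R^{n+n^2})=n+n^2$ yields $\dim(P)\leq n(n+1)/2$.

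For tightness under the hypotheses $Q$ dense (meaning $Q_{ij}\neq 0$ for all $i\neq j$) and $Z=\{0,1\}^n$, I would consider an arbitrary affine equality $\alpha^\top z+\langle B,W\rangle=\beta$ with $B$ symmetric that vanishes on $P$, and show that $(\alpha,B,\beta)$ is a linear combination of the Proposition~\ref{lem:valideq} equations. Evaluating at $S=\emptyset$ gives $\beta=0$; at $S=\{i\}$ gives $B_{ii}=-\alpha_i Q_{ii}$; and at $S=\{i,j\}$ with $i\neq j$, substituting the closed-form inverse of $Q_{\{i,j\}}$ together with the values of $B_{ii},B_{jj}$ already obtained, the equation simplifies after clearing denominators to $Q_{ij}\bigl[(\alpha_i+\alpha_j)Q_{ij}+2B_{ij}\bigr]=0$. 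Here is where density enters: $Q_{ij}\neq 0$ forces $B_{ij}=-(\alpha_i+\alpha_j)Q_{ij}/2$. A direct check shows these coefficient formulas match those produced by the linear combination $\sum_i(-\alpha_i)\bigl[(QW)_{ii}-z_i\bigr]=0$ of Proposition~\ref{lem:valideq} equations; consistency at all $S$ with $|S|\geq 3$ is then automatic, and we obtain $\dim(P)=n(n+1)/2$.

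The hard part will be the $|S|=2$ reduction, since it is the only place where density of $Q$ is invoked and where the off-diagonal entries of $B$ get pinned down. If any $Q_{ij}=0$, the argument collapses: $B_{ij}$ would remain a free parameter, producing a genuinely new affine relation on $P$ and shrinking its dimension below $n(n+1)/2$. Everything else is either definitional or a routine dimension count against the $n(n+1)/2$ explicit equalities identified above.
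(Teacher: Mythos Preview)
Your proof is correct, but it takes a different route from the paper's. For the upper bound you both do the same thing: count the $n(n-1)/2$ symmetry equations together with the $n$ equations from Proposition~\ref{lem:valideq}. For tightness, however, the paper works on the \emph{primal} side: it exhibits $n(n+1)/2+1$ explicit affinely independent points of $P$, namely $(0,0)$ together with the points $(\hat\ones_S,\hat Q_S^{-1})$ for $|S|\le 2$. Affine independence is immediate because, when $Q$ is dense, the point associated with $S=\{i,j\}$ (for $i\neq j$) is the \emph{unique} one among these with $W_{ij}\neq 0$, and then the singleton points are separated by their $z$-coordinates. Your argument is the \emph{dual} version: you take an arbitrary affine equation vanishing on $P$ and, by evaluating at $S=\emptyset,\{i\},\{i,j\}$, show it must lie in the span of the known equalities. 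Both approaches hinge on the same $2\times 2$ inverse computation and on the density hypothesis $Q_{ij}\neq 0$ at exactly the same spot. The paper's approach is slightly shorter and avoids the explicit coefficient-matching step; yours has the advantage of making transparent precisely which affine relation would be ``extra'' if some $Q_{ij}=0$, which is a nice structural observation.
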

\begin{proof}
Polyhedron $P$ has $n+n^2$ variables, but symmetry constraints $W_{ij}=W_{ji}$ and equalities \eqref{eq:valideq} imply the upper bound on the dimension. If $Q$ is dense, the set of points $(\hat\ones_{\{i,j\}},Q_{\{i,j\}}^{-1})_{i\neq j}$ are $n(n+1)/2$ affinely independent points of $P$, because each point is the unique one satisfying $W_{ij}\neq 0$. Together with point $(0,0)$, we find the required $n(n+1)/2+1$ affinely independent points in $P$.
\end{proof}

From Corollary~\ref{cor:dimension}, we see that (under mild conditions) there are no other equalities in the description of $P$. In order to construct a mixed-integer linear formulation for the vertices of $P$, we will use big-M constraints. Lemmas~\ref{lem:bigM} and \ref{lem:bigM2} are necessary to identify valid bounds for coefficients $M$.

\begin{lemma}\label{lem:bigM}
For any $S \subseteq [n]$, $Q^{-1} \succeq \hat Q_{S}^{-1}$ and $\|\hat Q_{S}^{-1}\|_{\infty} \leq \lambda_{\max}(Q^{-1})$.
\end{lemma}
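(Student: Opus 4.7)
My plan is to prove the two claims sequentially, using the block-decomposition of $Q$ induced by $S$ and $\bar S := [n]\setminus S$ for the PSD comparison, and then extracting the norm bound as a corollary.

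First, for the inequality $Q^{-1}\succeq \hat Q_S^{-1}$, I would permute rows/columns so that indices of $S$ come first, and write
\begin{equation*}
Q = \begin{pmatrix} Q_S & Q_{S,\bar S}\\ Q_{\bar S,S} & Q_{\bar S}\end{pmatrix}.
\end{equation*}
Using the standard block-inverse formula (valid since $Q\succ 0$, hence both $Q_S\succ 0$ and the Schur complement $M := Q_{\bar S} - Q_{\bar S,S}Q_S^{-1}Q_{S,\bar S}\succ 0$), the inverse takes the form
\begin{equation*}
Q^{-1} = \begin{pmatrix} Q_S^{-1} + AM^{-1}A^\top & -AM^{-1}\\ -M^{-1}A^\top & M^{-1}\end{pmatrix},\quad \text{where } A := Q_S^{-1}Q_{S,\bar S}.
\end{equation*}
Subtracting $\hat Q_S^{-1}$ (which has $Q_S^{-1}$ in the $S$-block and zeros elsewhere) gives
\begin{equation*}
Q^{-1} - \hat Q_S^{-1} = \begin{pmatrix} AM^{-1}A^\top & -AM^{-1}\\ -M^{-1}A^\top & M^{-1}\end{pmatrix} = \begin{pmatrix} A \\ -I\end{pmatrix} M^{-1} \begin{pmatrix} A^\top & -I\end{pmatrix}.
\end{equation*}
Since $M^{-1}\succ 0$, the right-hand side is an outer product and thus PSD, establishing part one. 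The key insight (and the only nontrivial step) is recognizing the $2\times 2$ block difference as an outer product weighted by the Schur-complement inverse.

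For the infinity-norm bound, I would combine two well-known facts about PSD matrices. First, $\hat Q_S^{-1}\succeq 0$ (it is a principal block inverse padded with zeros). For any PSD matrix $B$, the Cauchy--Schwarz inequality applied to the inner product $\langle u,v\rangle_B := u^\top B v$ yields $|B_{ij}|\leq \sqrt{B_{ii}B_{jj}}\leq \max_k B_{kk}\leq \lambda_{\max}(B)$, so $\|\hat Q_S^{-1}\|_\infty \leq \lambda_{\max}(\hat Q_S^{-1})$. Second, by the variational characterization $\lambda_{\max}(B)=\max_{\|x\|=1}x^\top B x$, the Loewner ordering $Q^{-1}\succeq \hat Q_S^{-1}$ from part one implies $\lambda_{\max}(Q^{-1})\geq \lambda_{\max}(\hat Q_S^{-1})$. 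Chaining these two inequalities gives $\|\hat Q_S^{-1}\|_\infty\leq \lambda_{\max}(Q^{-1})$, completing the proof.

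I expect no substantive obstacle: the first part is essentially bookkeeping with the block-inverse formula, and the second part is a direct consequence combined with elementary PSD facts. The only thing requiring care is to verify that the outer-product factorization of $Q^{-1}-\hat Q_S^{-1}$ is done correctly; once this is in place, everything else follows.
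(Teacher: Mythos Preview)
Your proof is correct, but your route for the first claim differs from the paper's. You use the explicit Schur-complement block-inverse formula to write $Q^{-1}-\hat Q_S^{-1}$ as $\begin{pmatrix}A\\-I\end{pmatrix}M^{-1}\begin{pmatrix}A^\top&-I\end{pmatrix}$ and conclude positive semidefiniteness from the outer-product form. The paper instead argues by eigenvalues: it reduces to showing $I\succeq Q^{1/2}\hat Q_S^{-1}Q^{1/2}$, observes that this matrix has the same nonzero eigenvalues as $\hat Q_S^{-1}Q$, and then notes (via the identity \eqref{eq:mproduct}) that $\hat Q_S^{-1}Q$ is block upper triangular with diagonal blocks $I_{|S|}$ and $0$, hence all eigenvalues lie in $\{0,1\}$. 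Your argument is more explicit and self-contained but requires writing out the full block inverse; the paper's argument reuses a computation already established in Proposition~\ref{lem:valideq} and avoids the Schur-complement formula entirely. For the second claim the two arguments are essentially the same, differing only in whether one bounds the diagonals of $\hat Q_S^{-1}$ by those of $Q^{-1}$ directly (the paper) or passes through $\lambda_{\max}(\hat Q_S^{-1})\le\lambda_{\max}(Q^{-1})$ (your version).
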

\begin{proof}
To prove $Q^{-1} \succeq \hat Q_{S}^{-1}$ for $S \subseteq [n]$, it suffices to show $I \succeq Q^{1/2} \hat Q_{S}^{-1} Q^{1/2}$. Since switching the order of matrix multiplication does not change the set of nonzero eigenvalues, the nonzero eigenvalues of $Q^{1/2} \hat Q_{S}^{-1}Q^{1/2}$ coincide with those of $\hat Q_{S}^{-1}Q $. From \eqref{eq:mproduct} one sees that $\hat Q_{S}^{-1}Q =
\begin{pmatrix} \small
I_{|S|} & Q_{S}^{-1} Q_{S, [n] \backslash S} \\
0 & 0 
\end{pmatrix}$ is an upper triangular matrix, which has a maximum eigenvalue of one. Then we conclude that $I \succeq Q^{1/2} \hat Q_{S}^{-1} Q^{1/2}$ and thus $Q^{-1} \succeq \hat Q_{S}^{-1}$. 

For the second part, 
it follows that for $i \in [n]$, $(\hat Q_{S}^{-1})_{ii}\leq Q_{ii}^{-1} \leq \lambda_{\max} (Q^{-1})$. Since $\hat Q_{S}^{-1} \succeq 0$, for any $i,j \in [n]$, $(\hat Q_{S}^{-1})_{ij}^2\leq (\hat Q_{S}^{-1})_{ii}(\hat Q_{S}^{-1})_{jj}$. As $\lambda_{\max}(Q^{-1})$ gives a uniform bound on the diagonal elements of $\hat Q_{S}^{-1}$, $\lambda_{\max}(Q^{-1})$ also bounds the absolute value of the off-diagonal elements of $\hat Q_{S}^{-1}$. 
\end{proof}

Next, we define 
\begin{equation}\label{eq:defM}M \defeq \lambda_{\max}(Q^{-1}) \max_{i \in [n]} \big \{\|Q_{\{i\}}\|_{2} \big \}
\end{equation} 
and prove that $M$ provides a bound for the off-diagonal elements of $\hat Q_{S}^{-1} Q$ 
for any $S \subseteq [n]$ in the following lemma.

\begin{lemma}\label{lem:bigM2}
For any $S \subseteq [n]$, the off-diagonals of $\hat Q_{S}^{-1} Q$ are bounded by $M$.
\end{lemma}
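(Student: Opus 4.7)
My plan is to leverage the explicit block form of $\hat Q_S^{-1}Q$ derived in equation~\eqref{eq:mproduct}: after permuting rows and columns so that indices in $S$ appear first, all nonzero off-diagonal entries of $\hat Q_S^{-1}Q$ are contained in the upper-right block $Q_S^{-1}Q_{S,[n]\setminus S}$, since the rows indexed by $[n]\setminus S$ vanish. It therefore suffices to bound a generic entry $(i,j)$ with $i\in S$ and $j\in [n]\setminus S$, because the remaining off-diagonals are identically zero.

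Fix such $(i,j)$. Writing the entry as an inner product and applying Cauchy--Schwarz gives
\begin{equation*}
\bigl|(\hat Q_S^{-1}Q)_{ij}\bigr|
\;=\; \bigl|e_i^{\top}Q_S^{-1}Q_{S,\{j\}}\bigr|
\;\leq\; \|Q_S^{-1}e_i\|_2\cdot \|Q_{S,\{j\}}\|_2,
\end{equation*}
where $Q_{S,\{j\}}$ is the restriction of the $j$-th column of $Q$ to rows indexed by $S$, and $\|\cdot\|_2$ on vectors denotes the Euclidean norm. For the first factor, since $Q_S^{-1}\succ 0$ its operator $2$-norm equals $\lambda_{\max}(Q_S^{-1})$, so $\|Q_S^{-1}e_i\|_2\leq \lambda_{\max}(Q_S^{-1})$. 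The nonzero spectrum of $\hat Q_S^{-1}$ coincides with that of $Q_S^{-1}$, and the Loewner ordering $Q^{-1}\succeq \hat Q_S^{-1}$ supplied by Lemma~\ref{lem:bigM} yields $\lambda_{\max}(Q_S^{-1})\leq \lambda_{\max}(Q^{-1})$. For the second factor, discarding the entries indexed by $[n]\setminus S$ only drops nonnegative squared terms, so $\|Q_{S,\{j\}}\|_2\leq \|Q_{\{j\}}\|_2$. Taking the maximum over $j\in[n]$ yields the claimed bound $M$.

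I do not expect any significant obstacle: the argument is essentially a one-line application of Cauchy--Schwarz combined with the spectral bound from Lemma~\ref{lem:bigM}. The only subtlety is notational: I invoke the operator $2$-norm only through the identity $\|Av\|_2\leq \lambda_{\max}(A)\|v\|_2$ for positive semidefinite $A$ (applied to $v=e_i$), which does not conflict with the paper's convention of reserving $\|\cdot\|_2$ for the Frobenius norm on matrices. One also has to read $\|Q_{\{j\}}\|_2$ in the definition of $M$ as the Euclidean norm of the $j$-th column of $Q$ (a vector quantity), which matches the Frobenius norm of that column viewed as an $n\times 1$ matrix.
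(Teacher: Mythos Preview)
Your proof is correct and essentially identical to the paper's: both identify from \eqref{eq:mproduct} that the only nonzero off-diagonal entries lie in the block $Q_S^{-1}Q_{S,[n]\setminus S}$, and bound these via $\lambda_{\max}(Q_S^{-1})\|Q_{S,\{j\}}\|_2$ combined with Lemma~\ref{lem:bigM}. The only cosmetic difference is that the paper bounds the whole $j$-th column at once using $\|Q_S^{-1}Q_{S,\{j\}}\|_\infty\leq\|Q_S^{-1}Q_{S,\{j\}}\|_2\leq\lambda_{\max}(Q_S^{-1})\|Q_{S,\{j\}}\|_2$, whereas you fix the row index $i$ and apply Cauchy--Schwarz, which amounts to the same inequality.
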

\begin{proof}
 Note that $\hat Q_{S}^{-1} Q = \begin{pmatrix}
I_{|S|} & Q_{S}^{-1} Q_{S, [n]\backslash S} \\
0 & 0 \end{pmatrix}$. For any $j \notin S$, \begin{align*}\|Q_{S}^{-1} Q_{S, \{j\}}\|_{\infty} \leq \|Q_{S}^{-1} Q_{S, \{j\}}\|_{2} &\leq \lambda_{\max}(Q_{S}^{-1}) \|Q_{S, \{j\}}\|_{2} \\
&= \lambda_{\max} (\hat Q_{S}^{-1}) \|Q_{S, \{j\}}\|_{2} \leq \lambda_{\max}(Q^{-1}) \|Q_{\{j\}}\|_{2},\end{align*} where the last inequality follows from Lemma \ref{lem:bigM}. 
\end{proof}

One can make a few observations about $P = \{(\hat \ones_{S}, \hat Q_{S}^{-1})\}_{\hat \ones_{S} \in Z}$. Note that at extreme points of $P$, $W=\hat Q_S^{-1}$ for some $S$. Thus, for any extreme point $(z, W) \in P$, $W_{ij}$ is nonzero only if $z_i = z_j = 1$. Moreover, for any $S \subseteq [n]$, $(\hat \ones_{S}, \hat Q_{S}^{-1}) \in P$, $Q \hat Q_{S}^{-1} =  QW=\begin{pmatrix}
I_{|S|} & 0  \\
Q_{S, [n] \backslash S}^{\top} Q_{S}^{-1} & 0 
\end{pmatrix}$, and the off-diagonal entries in the $i^{th}$ row of $Q W$ are all zeros if $i \in S$. These two observations lead to the formulation in the following proposition.
 \begin{proposition}\label{prop:bilinear} 
 The extreme points of $P$  are described as 
\begin{align*}
\left\{(\hat e_{S}, \hat Q_{S}^{-1})_{\hat e_S \in Z}  \right\} = \Big\{(z, W) &\in Z \times \R^{n \times n} \; | \; \sum_{k = 1}^{n} Q_{ik} W_{ki} = z_i, \; \forall i \in [n], \\
&-M(1 - z_i) \leq \sum_{k = 1}^{n} Q_{ik} W_{kj} \leq M(1 - z_i), \; \forall i \neq j, \\
&|W_{ij}| \leq \lambda_{\max}(Q^{-1}) \min\{z_i, z_j\}, \; \forall i,j \in [n]\Big\}.
\end{align*}

\end{proposition}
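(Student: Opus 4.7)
\textbf{Proof plan for Proposition~\ref{prop:bilinear}.} The plan is to prove set equality in both directions, leveraging the structural results on $\hat Q_S^{-1}$ and $Q\hat Q_S^{-1}$ already established in the preceding lemmas.

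For the inclusion $\{(\hat e_S,\hat Q_S^{-1})\}_{\hat e_S \in Z}\subseteq \{\ldots\}$, I would fix an arbitrary $S$ with $\hat e_S\in Z$ and verify each constraint for $(z,W)=(\hat e_S,\hat Q_S^{-1})$. The equality $\sum_k Q_{ik}W_{ki}=z_i$ is exactly the statement of Proposition~\ref{lem:valideq}. For the big-$M$ inequalities with $i\neq j$, I would use the block representation \eqref{eq:mproduct}, which shows that the $i$-th row of $QW=Q\hat Q_S^{-1}$ has all zero off-diagonal entries when $i\in S$ (matching $z_i=1$, so the right-hand side is $0$), while for $i\notin S$ (so $z_i=0$ and the bound is $M$), the off-diagonal entries coincide with a row of $Q_{S,[n]\setminus S}^\top Q_S^{-1}$, which is bounded in absolute value by $M$ by Lemma~\ref{lem:bigM2}. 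Finally, for the magnitude bound $|W_{ij}|\leq \lambda_{\max}(Q^{-1})\min\{z_i,z_j\}$, I would observe that $\hat Q_S^{-1}$ vanishes outside the $S\times S$ block (handling the case $\min\{z_i,z_j\}=0$), and inside that block its entries are bounded by $\lambda_{\max}(Q^{-1})$ by Lemma~\ref{lem:bigM}.

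For the reverse inclusion, I would take any $(z,W)\in Z\times\R^{n\times n}$ satisfying the three families of constraints and define $S\defeq\{i\in[n]:z_i=1\}$, so that $z=\hat e_S$. The goal is to show $W=\hat Q_S^{-1}$. The magnitude bound immediately forces $W_{ij}=0$ whenever $i\notin S$ or $j\notin S$, since $\min\{z_i,z_j\}=0$ in those cases. Consequently $W$ is supported on the $S\times S$ block, and the linear expression $\sum_{k}Q_{ik}W_{kj}$ reduces to $(Q_S W_{S,S})_{ij}$ when $i,j\in S$. For $i\in S$, combining the equality $\sum_k Q_{ik}W_{ki}=z_i=1$ with the big-$M$ inequalities (which, since $1-z_i=0$, collapse to $\sum_k Q_{ik}W_{kj}=0$ for every $j\neq i$) yields $Q_S W_{S,S}=I_{|S|}$, and thus $W_{S,S}=Q_S^{-1}$, i.e., $W=\hat Q_S^{-1}$.

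The main subtlety, which is exactly what motivates the constants chosen in \eqref{eq:defM} and Lemmas~\ref{lem:bigM}--\ref{lem:bigM2}, is that the $n$ diagonal equalities alone do not identify $W_{S,S}$ with $Q_S^{-1}$; one needs the off-diagonal big-$M$ inequalities to vanish when $z_i=1$ so as to pin down all entries of $Q_S W_{S,S}$. Both the off-diagonal big-$M$ bound $M$ and the entrywise bound $\lambda_{\max}(Q^{-1})$ must be large enough to be valid at every extreme point (this is precisely what the preceding lemmas guarantee), yet the formulation works because when $z_i=1$ the factor $1-z_i=0$ forces the correct equality regardless of how large $M$ is, and when $z_i=0$ or $z_j=0$ the magnitude constraint forces $W_{ij}=0$ regardless of $\lambda_{\max}(Q^{-1})$. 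Once these observations are in place, the proof is an essentially routine case analysis on whether indices lie in $S$ or its complement.
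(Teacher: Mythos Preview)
Your proposal is correct and follows essentially the same approach as the paper's proof: both directions rely on the magnitude bound to force $W$ to vanish outside the $S\times S$ block, on the collapse of the big-$M$ inequalities to equalities when $z_i=1$ to obtain $Q_S W_{S,S}=I_{|S|}$, and on Lemmas~\ref{lem:bigM} and~\ref{lem:bigM2} together with the block identity~\eqref{eq:mproduct} for the forward inclusion. Your write-up is in fact somewhat cleaner in separating the two inclusions explicitly, whereas the paper interleaves them.
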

\begin{proof}
For any $z = \hat e_{S} \in Z$, the constraint
\begin{displaymath}\label{bi:bigM}
|W_{ij}| \leq \lambda_{\max}(Q^{-1}) \min \{z_i, z_j\}, \quad \forall i,j \in [n],
\end{displaymath}
implies that $W_{ij} = 0$ if either $i$ or $j$ is not in $S$. For $i \in S$, we have 
\begin{align}
& \sum_{k = 1}^{n} Q_{ik} W_{ki} = 1 \label{bi:cons1} \\
& \sum_{k = 1}^{n} Q_{ik} W_{kj} = 0, \quad \forall j \neq i\label{bi:cons2}.
\end{align}
Inequalities \eqref{bi:cons1} and \eqref{bi:cons2} imply that 
$\begin{pmatrix} Q_{S} & Q_{S, [n] \backslash S}\end{pmatrix} \begin{pmatrix} W_{S} \\ W_{S, [n] \backslash S}^{\top}\end{pmatrix} = I$. Since $W_{S, [n] \backslash S} = 0$, we have $Q_S W_{S} = I$ and $W = \hat Q_{S}^{-1}$.  
Therefore, $Q \hat Q_{S}^{-1} = \begin{pmatrix}
I & 0 \\
Q_{S, [n]\backslash S}^{\top} Q_S^{-1} & 0
\end{pmatrix}$. It is clear that the off-diagonal elements in the $i^{th}$ row are all zero if $i \in S$, otherwise (if $i\not\in S$) they are bounded by $M$ according to Lemma ~\ref{lem:bigM2}. In other words,  constraints
\begin{align*}
& -M (1 - z_i) \leq \sum_{k = 1}^{n} Q_{ik} W_{kj} \leq M(1 - z_i), \quad \forall j \neq i
\end{align*}
hold. Moreover, thanks to Lemma ~\ref{lem:bigM}, the constraints 
\begin{equation}
|W_{ij}| \leq \lambda_{\max}(Q^{-1}) \min\{z_i, z_j\}, \; \forall i,j \in [n]
\end{equation}
hold at $W = \hat Q_{S}^{-1}$ and $z = \hat \ones_{S}$ as well.
\end{proof}

	 Proposition~\ref{prop:bilinear} allows us to give a mixed-integer \textit{linear} formulation for the \MIQO\ problem \eqref{eq:miqo}. Substituting the mixed-integer linear representation of $P$ given in Proposition ~\ref{prop:bilinear} in the equivalent \MIQO\ formulation \eqref{eq:miqoRelax3-1}, 
we arrive at an \textit{explicit} mixed-integer linear formulation for \eqref{eq:miqo}:
\begin{subequations}
\label{form:MILP}
\begin{align}
\min_{z, W} \; & -\frac{1}{2} a^{\top} W a + b^{\top} z  \\
\text{s.t.} \; & \sum_{k = 1}^{n} Q_{ik} W_{ki} = z_i, \quad \forall i \in [n]  \\
\text{(MILO)} \quad \quad \ \  \ & -M (1 - z_i) \leq \sum_{k = 1}^{n} Q_{ik} W_{kj} \leq M(1 - z_i), \quad \forall i \neq j \\
& |W_{ij}| \leq \lambda_{\max}(Q^{-1}) \min \{z_i, z_j\}, \quad \forall i,j \in [n]  \\
& z \in Z,
\end{align}
\end{subequations}
where $M$ is defined in \eqref{eq:defM}.

We point out that the mixed-integer representation of $P$ in Proposition~\ref{prop:bilinear} relies on big-M constraints and, therefore, it is not a strong formulation. Nonetheless, advanced mixed-integer linear optimization solvers have a plethora of built-in techniques to improve such formulations.  Preliminary computations using Gurobi indicate the following findings:
\begin{enumerate}
\item The natural relaxation of \eqref{form:MILP} is very weak and, therefore, \eqref{form:MILP} results in worse performance than alternative (nonlinear) formulations for problem~\eqref{eq:miqo} in most cases.
\item In some cases, however, and notably when the matrix $Q$ is sparse, Gurobi improves the relaxation in presolve to the point where the problems are solved at the root node, faster than existing formulations for \eqref{eq:miqo}. This situation illustrates that (in some cases), due to the polyhedrality of $P$, existing methods can improve even weak relaxations, whereas similar improvements are not available for nonlinear formulations.
\end{enumerate}

Detailed computational results are presented in Appendix~\ref{sec:numerical}. Overall, the results illustrate the potential benefits of reducing convexification to describing a polyhedral set, but also indicate that much work remains to be done for deriving better relaxations of $P$.

}

\section{Conclusion}\label{sec:conclusion}

In this paper, we \revised{first describe} the convex hull of the epigraph of a convex quadratic function with indicators \revised{in an extended space, which is given by one semi-definite constraint, and an exponential system of linear inequalities defining the convex hull of a polytope, $P$ (or $P_F$).  
We then derive the convex hull description in the original space as a semi-infinite conic quadratic program.   
Furthermore, we give a \emph{compact} mixed-integer linear representation of the vertices of the polytope $P$ that results in the first compact mixed-integer \textit{linear} formulation of \MIQO\ problems. While this is a weak formulation, our preliminary computational experience indicates that for a class of  sparse problems, 
off-the-shelf solvers are able to take advantage of the developments in MILO to improve the formulation substantially and it is competitive if not better than state-of-the-art approaches. To translate our theoretical developments into effective practical methods, it is crucial to exploit the structure of $P$. In our ongoing work, we explore the case when $Q$ is a Stieltjes matrix for which $P$ has a nice structure that allows us to use our results directly without resorting to the MILO formulation.   
Our results provide a unifying framework for several convex relaxations of \MIQO\ problems in the literature and can also be used to evaluate their strength.} 

\section*{Acknowledgments}
We thank the AE and three reviewers for their suggestions that improved the presentation. Alper Atamt\"urk is supported, in part, by 
NSF AI Institute for Advances in Optimization Award 211253, 
NSF grant 1807260, and DOD ONR grant 12951270.
Andr\'es G\'omez is supported, in part, by NSF grant 2006762 and AFOSR grant FA9550-22-1-0369. 
Simge K\"u\c{c}\"ukyavuz and Linchuan Wei are supported, in part, by  NSF grant 2007814 and DOD ONR grant N00014-19-1-2321.

\bibliographystyle{abbrvnat}
\bibliography{reference}

\begin{thebibliography}{35}
\providecommand{\natexlab}[1]{#1}
\providecommand{\url}[1]{\texttt{#1}}
\expandafter\ifx\csname urlstyle\endcsname\relax
  \providecommand{\doi}[1]{doi: #1}\else
  \providecommand{\doi}{doi: \begingroup \urlstyle{rm}\Url}\fi

\bibitem[POR()]{PORTA}
{PO}lyhedron {R}epresentation {T}ransformation {A}lgorithm.
\newblock \url{https://porta.zib.de/#download}.
\newblock Accessed: 2021-11-20.

\bibitem[Akt{\"u}rk et~al.(2009)Akt{\"u}rk, Atamt{\"u}rk, and
  G{\"u}rel]{akturk2009strong}
M.~S. Akt{\"u}rk, A.~Atamt{\"u}rk, and S.~G{\"u}rel.
\newblock A strong conic quadratic reformulation for machine-job assignment
  with controllable processing times.
\newblock \emph{Operations Research Letters}, 37:\penalty0 187--191, 2009.

\bibitem[Akt{\"u}rk et~al.(2010)Akt{\"u}rk, Atamt{\"u}rk, and
  G{\"u}rel]{aag:scheduling}
M.~S. Akt{\"u}rk, A.~Atamt{\"u}rk, and S.~G{\"u}rel.
\newblock Parallel machine match-up scheduling with manufacturing cost
  considerations.
\newblock \emph{Journal of Scheduling}, 13:\penalty0 95--110, 2010.

\bibitem[Albert(1969)]{albert1969conditions}
A.~Albert.
\newblock Conditions for positive and nonnegative definiteness in terms of
  pseudoinverses.
\newblock \emph{SIAM Journal on Applied Mathematics}, 17\penalty0 (2):\penalty0
  434--440, 1969.

\bibitem[Anstreicher and Burer(2021)]{anstreicher2021quadratic}
K.~M. Anstreicher and S.~Burer.
\newblock Quadratic optimization with switching variables: The convex hull for
  $n= 2$.
\newblock \emph{Mathematical Programming}, 188:\penalty0 421--441, 2021.

\bibitem[Atamt{\"u}rk and G{\'o}mez(2018)]{atamturk2018strong}
A.~Atamt{\"u}rk and A.~G{\'o}mez.
\newblock Strong formulations for quadratic optimization with {M}-matrices and
  indicator variables.
\newblock \emph{Mathematical Programming}, 170:\penalty0 141--176, 2018.

\bibitem[Atamt\"urk and G\'omez(2019)]{atamturk2019rank}
A.~Atamt\"urk and A.~G\'omez.
\newblock Rank-one convexification for sparse regression.
\newblock \emph{arXiv preprint arXiv:1901.10334}, 2019.

\bibitem[Atamt\"urk and G\'omez(2020)]{atamturk2020supermodularity}
A.~Atamt\"urk and A.~G\'omez.
\newblock Supermodularity and valid inequalities for quadratic optimization
  with indicators.
\newblock \emph{arXiv preprint arXiv:2012.14633}, 2020.

\bibitem[Atamt\"urk et~al.(2021)Atamt\"urk, G\'omez, and
  Han]{atamturk2021sparse}
A.~Atamt\"urk, A.~G\'omez, and S.~Han.
\newblock Sparse and smooth signal estimation: Convexification of
  {$\ell_0$}-formulations.
\newblock \emph{Journal of Machine Learning Research}, 22\penalty0
  (52):\penalty0 1--43, 2021.

\bibitem[Bach(2019)]{bach2019submodular}
F.~Bach.
\newblock Submodular functions: from discrete to continuous domains.
\newblock \emph{Mathematical Programming}, 175:\penalty0 419--459, 2019.

\bibitem[Bertsimas and King(2015)]{bertsimas2015or}
D.~Bertsimas and A.~King.
\newblock {OR} forum---an algorithmic approach to linear regression.
\newblock \emph{Operations Research}, 64:\penalty0 2--16, 2015.

\bibitem[Bertsimas et~al.(2021)Bertsimas, Cory-Wright, and
  Pauphilet]{bertsimas2020mixed}
D.~Bertsimas, R.~Cory-Wright, and J.~Pauphilet.
\newblock Mixed-projection conic optimization: A new paradigm for modeling rank
  constraints.
\newblock \emph{Operations Research}, 2021.
\newblock \doi{10.1287/opre.2021.2182}.
\newblock Article in Advance.

\bibitem[Bien et~al.(2013)Bien, Taylor, and Tibshirani]{bien2013lasso}
J.~Bien, J.~Taylor, and R.~Tibshirani.
\newblock A lasso for hierarchical interactions.
\newblock \emph{Annals of Statistics}, 41\penalty0 (3):\penalty0 1111, 2013.

\bibitem[Bienstock(1996)]{B:miqp}
D.~Bienstock.
\newblock Computational study of a family of mixed-integer quadratic
  programming problems.
\newblock \emph{Mathematical Programming}, 74\penalty0 (2):\penalty0 121--140,
  1996.

\bibitem[Ceria and Soares(1999)]{CS:dis-conv}
S.~Ceria and J.~Soares.
\newblock Convex programming for disjunctive convex optimization.
\newblock \emph{Mathematical Programming}, 86:\penalty0 595--614, 1999.

\bibitem[Chen et~al.(2014)Chen, Ge, Wang, and Ye]{chen2014complexity}
X.~Chen, D.~Ge, Z.~Wang, and Y.~Ye.
\newblock Complexity of unconstrained $l_2-l_p$ minimization.
\newblock \emph{Mathematical Programming}, 143\penalty0 (1):\penalty0 371--383,
  2014.

\bibitem[Cozad et~al.(2014)Cozad, Sahinidis, and Miller]{cozad2014learning}
A.~Cozad, N.~V. Sahinidis, and D.~C. Miller.
\newblock Learning surrogate models for simulation-based optimization.
\newblock \emph{AIChE Journal}, 60\penalty0 (6):\penalty0 2211--2227, 2014.

\bibitem[Dong et~al.(2015)Dong, Chen, and Linderoth]{dong2015regularization}
H.~Dong, K.~Chen, and J.~Linderoth.
\newblock Regularization vs. relaxation: A conic optimization perspective of
  statistical variable selection.
\newblock \emph{arXiv preprint arXiv:1510.06083}, 2015.

\bibitem[Frangioni and Gentile(2006)]{Frangioni2006}
A.~Frangioni and C.~Gentile.
\newblock Perspective cuts for a class of convex 0--1 mixed integer programs.
\newblock \emph{Mathematical Programming}, 106:\penalty0 225--236, 2006.

\bibitem[Frangioni and Gentile(2007)]{Frangioni2007}
A.~Frangioni and C.~Gentile.
\newblock {SDP} diagonalizations and perspective cuts for a class of
  nonseparable {MIQP}.
\newblock \emph{Operations Research Letters}, 35:\penalty0 181--185, 2007.

\bibitem[Frangioni et~al.(2020)Frangioni, Gentile, and
  Hungerford]{frangioni2018decompositions}
A.~Frangioni, C.~Gentile, and J.~Hungerford.
\newblock Decompositions of semidefinite matrices and the perspective
  reformulation of nonseparable quadratic programs.
\newblock \emph{Mathematics of Operations Research}, 45\penalty0 (1):\penalty0
  15--33, 2020.

\bibitem[Gao and Li(2011)]{Gao2011}
J.~Gao and D.~Li.
\newblock Cardinality constrained linear-quadratic optimal control.
\newblock \emph{IEEE Transactions on Automatic Control}, 56:\penalty0
  1936--1941, 2011.

\bibitem[G{\"u}nl{\"u}k and Linderoth(2010)]{Gunluk2010}
O.~G{\"u}nl{\"u}k and J.~Linderoth.
\newblock Perspective reformulations of mixed integer nonlinear programs with
  indicator variables.
\newblock \emph{Mathematical Programming}, 124:\penalty0 183--205, 2010.

\bibitem[Han and G{\'o}mez(2021)]{han2021compact}
S.~Han and A.~G{\'o}mez.
\newblock Compact extended formulations for low-rank functions with indicator
  variables.
\newblock \emph{arXiv preprint arXiv:2110.14884}, 2021.

\bibitem[Han et~al.(2020)Han, G{\'o}mez, and Atamt{\"u}rk]{hga:2x2}
S.~Han, A.~G{\'o}mez, and A.~Atamt{\"u}rk.
\newblock 2x2 convexifications for convex quadratic optimization with indicator
  variables.
\newblock \emph{arXiv preprint arXiv:2004.07448}, 2020.

\bibitem[He et~al.(2021)He, Han, G{\'o}mez, Cui, and Pang]{he2021comparing}
Z.~He, S.~Han, A.~G{\'o}mez, Y.~Cui, and J.-S. Pang.
\newblock Comparing solution paths of sparse quadratic minimization with a
  stieltjes matrix.
\newblock \emph{Department of Industrial and Systems Engineering, University of
  Southern California}, 2021.

\bibitem[Hochbaum(2001)]{hochbaum2001efficient}
D.~S. Hochbaum.
\newblock An efficient algorithm for image segmentation, {Markov} random fields
  and related problems.
\newblock \emph{Journal of the ACM (JACM)}, 48\penalty0 (4):\penalty0 686--701,
  2001.

\bibitem[Jeon et~al.(2017)Jeon, Linderoth, and Miller]{Jeon2017}
H.~Jeon, J.~Linderoth, and A.~Miller.
\newblock Quadratic cone cutting surfaces for quadratic programs with on--off
  constraints.
\newblock \emph{Discrete Optimization}, 24:\penalty0 32--50, 2017.

\bibitem[K{\"u}{\c{c}}{\"u}kyavuz et~al.(2020)K{\"u}{\c{c}}{\"u}kyavuz,
  Shojaie, Manzour, and Wei]{KSMW20}
S.~K{\"u}{\c{c}}{\"u}kyavuz, A.~Shojaie, H.~Manzour, and L.~Wei.
\newblock Consistent second-order conic integer programming for learning
  {Bayesian} networks.
\newblock \emph{arXiv preprint arXiv:2005.14346}, 2020.

\bibitem[Liu et~al.(2022)Liu, Fattahi, G{\'o}mez, and
  K{\"u}{\c{c}}{\"u}kyavuz]{liu2021graph}
P.~Liu, S.~Fattahi, A.~G{\'o}mez, and S.~K{\"u}{\c{c}}{\"u}kyavuz.
\newblock A graph-based decomposition method for convex quadratic optimization
  with indicators.
\newblock \emph{Mathematical Programming}, 2022.
\newblock \doi{10.1007/s10107-022-01845-0}.
\newblock Article in Advance.

\bibitem[Manzour et~al.(2021)Manzour, K{\"u}{\c{c}}{\"u}kyavuz, Wu, and
  Shojaie]{MKS21}
H.~Manzour, S.~K{\"u}{\c{c}}{\"u}kyavuz, H.-H. Wu, and A.~Shojaie.
\newblock Integer programming for learning directed acyclic graphs from
  continuous data.
\newblock \emph{{INFORMS} Journal on Optimization}, 3\penalty0 (1):\penalty0
  46--73, 2021.

\bibitem[Penrose(1955)]{penrose1955generalized}
R.~Penrose.
\newblock A generalized inverse for matrices.
\newblock In \emph{Mathematical Proceedings of the Cambridge Philosophical
  Society}, volume~51, pages 406--413. Cambridge University Press, 1955.

\bibitem[Wei et~al.(2020)Wei, G{\'o}mez, and
  K{\"u}{\c{c}}{\"u}kyavuz]{wei2020convexification}
L.~Wei, A.~G{\'o}mez, and S.~K{\"u}{\c{c}}{\"u}kyavuz.
\newblock On the convexification of constrained quadratic optimization problems
  with indicator variables.
\newblock In \emph{International Conference on Integer Programming and
  Combinatorial Optimization}, pages 433--447. Springer, 2020.

\bibitem[Wei et~al.(2022)Wei, G\'omez, and K\"u\c{c}\"ukyavuz]{wei2021ideal}
L.~Wei, A.~G\'omez, and S.~K\"u\c{c}\"ukyavuz.
\newblock Ideal formulations for constrained convex optimization problems with
  indicator variables.
\newblock \emph{Mathematical Programming}, 192\penalty0 ((1-2)):\penalty0
  57–--88, 2022.

\bibitem[Zheng et~al.(2014)Zheng, Sun, and Li]{zheng2014improving}
X.~Zheng, X.~Sun, and D.~Li.
\newblock Improving the performance of {MIQP} solvers for quadratic programs
  with cardinality and minimum threshold constraints: A semidefinite program
  approach.
\newblock \emph{INFORMS Journal on Computing}, 26:\penalty0 690--703, 2014.

\end{thebibliography}

\appendix

\section{Validity of inequalities \eqref{eq:original2x2}}\label{app:validity}

Here we directly check the validity of the inequalities in Example~\ref{ex:2x2}, which are repeated for convenience.
\begin{align*}
t\geq\max_{y\in \R_+^6}\;&\frac{y_1x_1^2+y_2x_2^2+(-y_1/d_1-y_2/d_2-y_3-y_4+y_5+y_6)x_1x_2}{(1/\Delta)y_4+(y_1/d_1-y_4/\Delta+y_5/\Delta)z_1+(y_2/d_2-y_4/\Delta+y_6/\Delta)z_2}\\
\text{s.t.}\;&4y_1y_2\geq (-y_1/d_1-y_2/d_2-y_3-y_4+y_5+y_6)^2,\; y_1+y_2\leq 1.
\end{align*}
If $z_1=z_2=x_1=x_2=0$, then the inequality reduces to $t\geq 0$. 
	If $z_1=1$ and $z_2=x_2=0$, the inequality reduces to 
	$$t\geq \max_{y\in \R_+^2}\frac{y_1x_1^2}{y_1/d_1+y_5/\Delta} \cdot$$
	The inequality can be maximized by setting $y_6=y_1/d_1$ and $y_2=y_3=y_4=y_5=0$, and reduces to $t\geq d_1x_1^2$.
	The case $z_2=1$, $z_1=x_1=0$ is identical.
	
	Finally, if $z_1=z_2=1$, then the inequality reduces to 
	\begin{equation}\label{eq:valid12}t\geq \max_{y\in \R_+^6}\frac{y_1x_1^2+y_2x_2^2+(-y_1/d_1-y_2/d_2-y_3-y_4+y_5+y_6)x_1x_2}{y_1/d_1+y_2/d_2-y_4/\Delta+y_5/\Delta+y_6/\Delta} \cdot\end{equation}
	
	Note that we can assume, without loss of generality, that $y_3=0$ (otherwise, if $y_3>0$, one can increase $y_4$ and reduce $y_3$ to obtain a feasible solution with better objective value). Let $\bar y=y_4-y_5-y_6$.  With these simplifications, \eqref{eq:valid12} reduces to 
	\begin{subequations}\label{eq:simplified}
		\begin{align}
		t\geq \max \;&\frac{y_1x_1^2+y_2x_2^2+(-y_1/d_1-y_2/d_2-\bar y)x_1x_2}{y_1/d_1+y_2/d_2-\bar y/\Delta}\\
		\text{s.t.}\;&4y_1y_2\geq (-y_1/d_1-y_2/d_2-\bar y)^2,\; y_1+y_2\leq 1\\
		&y_1,\;y_2\geq 0,\; \bar y \text{ free}.
		\end{align}
	\end{subequations}
	By taking the derivative of the objective with respect to $\bar y$, we conclude that (for fixed values of $y_1$ and $y_2$) the objective is monotone, and thus $\bar y$ may be assumed to be set at a bound. In particular, the rotated cone constraint holds at equality, and $\bar y=-y_1/d_1-y_2/d_2\pm 2\sqrt{y_1y_2}$. Thus, problem \eqref{eq:simplified} further reduces to 
	\begin{subequations}\label{eq:simplified2}
		\begin{align}
		t\geq \Delta\max \;&\frac{y_1x_1^2+y_2x_2^2\pm 2\sqrt{y_1y_2}x_1x_2}{y_1d_2+y_2d_1\pm 2\sqrt{y_1y_2}}\\
		\text{s.t.}\;&y_1+y_2\leq 1\label{eq:simplified2_scaling}\\
		&y_1,\;y_2\geq 0
		\end{align}
	\end{subequations}
	
Substitute $\bar y_1=\pm\sqrt{y_1}$ and $\bar y_2=\pm \sqrt{y_2}$. By multiplying by $(\bar y_1^2d_2+\bar y_2^2d_1+ 2\bar y_1\bar y_2)/(t\Delta)\geq 0$ on both sides of the inequality, we find that \eqref{eq:simplified2} is satisfied if and only if
$\forall \bar y_1,\bar y_2 \in \R$ satisfying $\bar y_1+\bar y_2\leq 1$, it holds
	$$\left\langle \begin{pmatrix}d_2/\Delta- x_1^2/t&1/\Delta- x_1x_2/t\\
	1/\Delta- x_1x_2/t&d_1/\Delta- x_2^2/t\end{pmatrix},\begin{pmatrix}\bar y_1^2&\bar y_1\bar y_2\\
	\bar y_1\bar y_2&\bar y_2^2\end{pmatrix}\right\rangle\geq 0,  $$
	which in turn holds if and only if
	\begin{align*}\begin{pmatrix}d_2/\Delta-x_1^2/t&1/\Delta-x_1x_2/t\\
	1/\Delta- x_1x_2/t&d_1/\Delta- x_2^2/t\end{pmatrix}\succeq 0 &\Longleftrightarrow \begin{pmatrix}t &x_1&x_2\\ x_1 & d_2/\Delta& 1/\Delta\\x_2&1/\Delta&d_1/\Delta\end{pmatrix}\succeq 0\\
	&\Longleftrightarrow t\geq d_1x_1^2-2x_1x_2+d_2x_2^2.\end{align*}

\revised{
	\section{Numerical Experiments}\label{sec:numerical}

Formulation MILO provides one way of utilizing Theorem ~\ref{thm:canonical} for general problems for which an explicit linear description of $P$ is not available. In this section, we discuss the practical effectiveness of MILO to solve problem \eqref{eq:miqo}. First, in \S\ref{sec:real}, we test MILO on best subset selection problems \eqref{eqn:bestsubset}. As MILO is a weak formulation due to big-M constraints, it is often outperformed by alternative formulations to solve MIQO problems in the literature. Then, in \S\ref{sec:synthetic}, we test the formulations in a class of graphical models which result in MIQO problems where matrix $Q$ is sparse. It turns out advanced optimization solvers are able to substantially improve the relaxation, and MILO has better practical performance than the usual alternatives for this class of problems.

 We compare MILO with the following alternative formulations:

 \noindent
	\textbf{Natural}: 
 The natural reformulation, where we replace the nonconvex constraint $x_i (1 - z_i) = 0$ in \eqref{eq:miqo} with $ |x_i| \leq 5  \|x^{*}\|_{\infty} z_i$, where $x^{*}$ denotes the optimal solution of the problem without binary variables or cardinality constraints. Observe that $5 \|x^{*}\|_{\infty}$ is not guaranteed to be a valid bound on $|x_i|$, thus this formulation may produce suboptimal solutions for \eqref{eq:miqo}.

 \noindent
	\textbf{Perspective}: The perspective reformulation \cite{aag:scheduling, CS:dis-conv, Frangioni2006, Gunluk2010} where we extract a diagonal term \textbf{diag}$(\delta)$ from $Q$ and  add the perspective constraints $s_i z_i \geq x_i^2, \; \forall i \in [n]$. We choose $\delta$ to be the minimum eigenvalue of $Q$ in our experiments. 
	\begin{align*}
	\min_{z, x, s} \; &\quad  \frac{1}{2} x^{\top} \left( Q - \textbf{diag}(\delta) \right) x + a^{\top} x + \frac{1}{2} \sum_{i = 1}^{n} \delta s_i + b^{\top} z \\
	\text{s.t.} \; & \quad s_i z_i \geq x_i^2, \; \quad \forall i \in [n] \nonumber \\
	& \quad z \in Z.\nonumber 
	\end{align*}

In all experiments, $Z$ is defined by a cardinality constraint, i.e., $Z= \{z \in \{0,1\}^{n} \; | \; \sum_{i = 1}^{n} z_i \leq r \}$, where $r=kn$ for a given sparsity parameter $0<k\le 1$, and $b=0$. The mixed-integer optimization problems are solved by Gurobi 9.0 on a laptop with Intel(R) Core(TM) i7-8750H 2.20 GHz and 32 GB RAM. We set the time limit to 30 minutes, and we use the default settings of Gurobi parameters.

\subsection{Best subset selection}\label{sec:real}

In this section, we solve the best subset selection problem ~\eqref{eqn:bestsubset} with varying $k$ on the benchmark datasets in 
Table ~\ref{tab:datasets}, available from the UCI machine learning repository\footnote{\url{https://archive.ics.uci.edu/ml/datasets.php}}. The performance measures considered are solution time, the number of nodes explored, and the initial optimality gap of the continuous relaxation. We also record the optimality gaps attained at the root node after presolve (in parentheses). Denoting the optimal objective value of a continuous relaxation by $\textbf{LB}$ and the exact optimal value by $\textbf{OPT}$, the initial optimality gap is calculated as 
\% gap = 100 $\times \frac{\textbf{OPT} - \textbf{LB}}{\textbf{OPT}}$.
For instances that hit the time limit, we report the average end gap in parentheses.

\begin{table}[!th]
		\caption{Benchmark datasets.}
		\label{tab:datasets}
		\vskip 1mm
		\centering
		{
			\begin{tabular}{c|cc}
			\hline
			\hline
			dataset & $m$ & $n$ \\
			\hline
			Housing & 506 & 13 \\
			Diabetes & 442 & 11 \\
			Servo & 167 & 19 \\
			AutoMPG & 392 & 25 \\ \hline \hline
			\end{tabular}
		}
	\end{table}

Table  \ref{tab:real}  shows the performance of the different formulations on these benchmark datasets. We observe that the relaxation quality of MILO is poor, with optimality gaps well above 100\% (in the range of $10^3-10^7$\%). Indeed, even though the objective of \eqref{eqn:bestsubset} has a trivial lower bound of $0$, the objective values produced by the continuous relaxation of MILO are in all cases negative. The bad relaxation quality leads to large numbers of branch-and-bound nodes and solution times. However, for the special case of $k=0.1$ on the first three datasets, 
Gurobi is able to close \emph{almost all} the gap at the root node and solve the problems with little or no branching. Thus,
while the  results clearly indicate that at the moment---in the context of a \emph{general} MIQO---standard methods\footnote{As expected, the perspective reformulation leads to better performance than the natural formulation in Housing. In the other datasets, the minimum eigenvalue of the matrix is close to $0$, and the perspective reformulation is not effective.} are better than the MILO formulation, in some cases, solvers might be able to exploit the polyhedrality of MILO. In the next section, we present experiments showcasing this phenomenon.

\subsection{Inference with graphical models}\label{sec:synthetic}

    Given a graph $G  = (V, E)$, we consider the following \MIQO\ problem
	\begin{subequations} \label{opt:sparse}
		\begin{align}
		\min_{z, x} \; & \quad \sum_{i \in V} \frac{1}{\sigma^2} (y_i - x_i)^2 + \sum_{(i,j) \in E} (x_i - x_j)^2 \\
		\text{s.t.} & \quad x_i (1 - z_i) = 0  \quad \forall i \in [n] \\
		&\quad \sum_{i = 1}^{n} z_i \leq k |V| .  
		\end{align}
	\end{subequations}
Problem \eqref{opt:sparse} arises in the sparse inference problem of a two-dimensional Gaussian Markov random field (GMRF), see \cite{liu2021graph} for an in-depth discussion. 
	
	The graph $G$ we consider in our experiment is a two-dimensional $10 \times 10$ grid. 	The corresponding Hessian matrix $Q$ in problem \eqref{opt:sparse} is sparse: each row has at most five nonzero entries (including the diagonal element). We use the random instances from \cite{he2021comparing}, available at \url{https://sites.google.com/usc.edu/gomez/data}, where $y_i = x_i + \mathcal N(0, \sigma)$ is a noisy observation of $x$, and there are three randomly sampled $3 \times 3$ blocks of $x$ to be nonzero. Note that $\sigma$ affects both the noise level of $y$ and the diagonal dominance of $Q$ in \eqref{opt:sparse}, with small noise values $\sigma$ resulting in problems with larger diagonal dominance.
	We test on $\sigma = 0.1, 0.2, 0.3, 0.4, 0.5$ and sparsity levels $k = 0.1, 0.2, 0.3, 0.4, 0.5$. For each $\sigma$, we use five randomly generated instances and report the average statistics.

	\begin{landscape}
	\begin{table}[!th]
	\caption{Performance of MILO, Natural and Perspective on the datasets in Table \eqref{tab:datasets}.}
	\label{tab:real}
	\vskip 1mm
	\centering
	 \small{

\begin{tabular}{ccccccccccccccc} \hline \hline
dataset & & $k$ & & \multicolumn{3}{l}{MILO} & & \multicolumn{3}{l}{Natural} & & \multicolumn{3}{l}{Perspective} \\\cline{5-7}\cline{9-11}\cline{13-15}
& & & & \% gap &\#node & time(\%endgap) & & \% gap & \#node & time  & & \% gap & \#node & time  \\
\hline
        \multirow{5}{*}{Housing} & & 0.1 & & 2.15E3(0) & 1 & 0.02 & & 43 & 1 & 0.041 & & 26 & 23 & 0.057 \\ 
        & & 0.2 & &
        5.55E3(4.0E3) & 91 & 0.16 & & 28.2 & 53 & 0.037 & & 16 & 77 & 0.137  \\ 
        & & 0.3 & & 
        9.01E3(6.8E3) & 302 & 0.35 & & 19.3 & 19 & 0.04 & & 11 & 105 & 0.146\\
        & & 0.4 & & 1.26E4(1.20E4) & 1151 & 0.51 & & 11.1 & 20 & 0.041 & & 5 & 65 & 0.117\\ 
        & & 0.5 & &
        1.43E4(1.39E4) & 1667 & 0.58 & & 8.7 & 18 & 0.043 & & 5 & 105 & 0.156\\ 
        \hline
       \multirow{5}{*}{Diabetes} & & 0.1 & & 
        4.24E3(0) & 1 & 0.02 & & 26.5 & 1 & 0.04 & & 25 & 21 & 0.136\\ 
        & & 0.2 & &
        1.03E4(9.3E3) & 66 & 0.29 & & 10.8 & 7 & 0.036 & & 10 & 101 & 0.294\\ 
        & & 0.3 & & 1.61E4(1.46E4) & 222 & 0.31 & & 7.2 & 20 & 0.042 & & 7 & 251 & 0.585 \\ 
        & & 0.4 & &
        2.17E4(1.46E4) & 424 & 0.27 & & 5.1 & 25 & 0.049 & & 5 & 386 & 0.724\\ 
        & & 0.5 & &
        2.37E4(2.33E4) & 662 & 0.37 & & 1.8 & 37 & 0.053 & & 2 & 352 & 0.736\\ 
        \hline
        \multirow{5}{*}{Servo} & & 0.1 & & 7.32E4(0) & 0 & 0.05 & & 40.8 & 1 & 0.034 & & 41 & 37 & 0.349\\
        & & 0.2 & &
        1.91E6(1.51E4) & 1541 & 1.1 & & 22.5 & 158 & 0.063 & & 22 & 1760 & 5.733\\ 
        & & 0.3 & &
        3E6(2.72E6) & 17556 & 8.88 & & 15 & 1107 & 0.112 & & 15 & 2534 & 4.075\\ 
        & & 0.4 & &
        3.88E6(3.71E6) & 40491 & 47.27 & & 8 & 2536 & 0.172 & & 8 & 4333 & 4.027\\
        & & 0.5 & &
        4.43E6(4.37E6) & 1.2E5 & 144.74 & & 1.8 & 2103 & 0.149 & & 2 & 7241 & 4.597\\ 
        \hline
        \multirow{5}{*}{AutoMPG} & & 0.1 & &
        7.7E6(6.16E6) & 549 & 2.71 & & 51.3 & 177 & 0.062 & & 51 & 549 & 4.315\\ 
        & & 0.2 & & 
        2.26E7(2.26E7) & 18932 & 72.13 & & 28.6 & 1320 & 0.156 & & 29 & 2015 & 4.385\\ 
        & & 0.3 & &
        3.51E7(3.38E7) & 3.4E5 & 844.37 & & 20.3 & 1.27E4 & 0.469 & & 20 & 3540 & 5.681\\ 
        & & 0.4 & &
        4.39E7(4.39E7) & 4.2E6 & 1800(3.28E5) & & 9.2 & 1.01E5 & 0.483 & & 9 & 1.59E4 & 12.64\\ 
        & & 0.5 & &
        5.76E7(5.71E7) & 9.46E6 & 1800(7.36E5) & & 3.9 & 9669 & 0.343 & & 4 & 1.04E4 & 7.508\\ \hline    \hline
        \multicolumn{15}{l}{1.\ A 1800 second solution time means Gurobi hits the time limit, and we report the best optimality gap in the following parenthesis.} \\
        \multicolumn{15}{l}{2.\ For MILO, the gap after Gurobi's presolve is reported in the following parenthesis.}
\end{tabular}
}
\end{table}

\end{landscape}

Table~\ref{tab:synthetic} summarizes the results. Similar to the experiments reported in \S\ref{sec:real}, the continuous relaxation of MILO is the worst among the three formulations, with gaps well over 100\%. However, in this case, Gurobi closes virtually all optimality gap in \emph{all} the instances, and the problems are solved very fast with at most one branch-and-bound node. The overall performance is significantly better than using the natural MIQO formulation, and also better than the perspective reformulation for these instances.

We conjecture that sparsity of $Q$, which leads to sparsity in the linear constraints of the MILO formulation, allows Gurobi to perform significant bound tightening in presolve. In contrast, Gurobi is unable to achieve a similar improvement with a nonlinear formulation. This clearly showcases the benefit of reducing the convexification of $X$ to describing a polyhedron in such cases.

	\begin{landscape}
	\begin{table}[!th]
	    \centering
	    \vskip 1mm
	    \caption{Performance of MILO, Natural, and Perspective formulations on graphical models.} \label{tab:synthetic}
	    
\small{
\begin{tabular}{ccccccccccccccc}
\hline \hline
$\sigma$ & & $k$ & & \multicolumn{3}{l}{MILO} & & \multicolumn{3}{l}{Natural} & & \multicolumn{3}{l}{Perspective} \\\cline{5-7}\cline{9-11}\cline{13-15}
& & & & \% gap &\#node & time & & \% gap & \#node & time(\%endgap)  & & \% gap & \#node & time  \\
\hline
\multirow{5}{*}{$0.1$} & & 0.1 & & 1598.83(0)   & 0   & 1.32 & & 21.5  & 77.8      & 0.36      &  & 0.8  & 6.2      & 0.17     \\
& & 0.2 & &
2257.57(0)   & 0.2 & 1.44 & & 2.5   & 995.2     & 0.37       & & 0.1  & 22.6     & 0.25     \\
& & 0.3 & &
2404.38(0)   & 0.8 & 1.66 & & 0.67  & 7.21E4  & 2.19       & & 0.1  & 22       & 0.27     \\
& & 0.4 & &
2473.30(0)   & 0.4 & 1.61 & & 0.25  & 1.83E7  & 252.27    & & 0    & 139      & 1.04     \\
& & 0.5 & &
2500.19(0)   & 0.8 & 1.77 & & 0.16  & 1.5E7 & 490.67     & & 0    & 42.4     & 0.46     \\
\hline
\multirow{5}{*}{$0.2$} & & 0.1 & &
983.97(0)    & 0   & 1.93 & & 16.6  & 239       & 2.02       & & 0.9  & 11.4     & 0.45     \\
& & 0.2 & &
1512.18(0)   & 0.4 & 2.12 & & 4.6   & 1.34E6  & 46.87      & & 0.15 & 262.6    & 2.76     \\
& & 0.3 & &
1783.68(0)   & 0.8 & 2.53 & & 2.48  & $3.79E6^{2}$ & $1281.78^{2}(0.2)$ & & 0.15 & 2363.6   & 10.66    \\
& & 0.4 & &
1958.07(0)   & 1   & 2.53 & & 1.34  & $3.75E7^{3}$ & $1487.74^{3}(0.2)$  &  & 0.11 & 2.70E4 & 21.31    \\
& & 0.5 & &
2045.22(0)   & 0.8 & 2.51 & & 0.69  & $3.74E7^{3}$ & $1341.41^{3}(1)$ & & 0.06 & 3.13E4 & 21.11    \\
\hline
\multirow{5}{*}{$0.3$} & &  0.1 & &
704.64(0)    & 0.4 & 2.53 & & 18.98 & 1.72E4  & 13.58     & & 1.34 & 65       & 1.12     \\
& & 0.2 & &
1291.39(0)   & 0.4 & 2.57 & & 9.12  & $6.02E6^{1}$ & $440.89^{1}(0.6)$    &  & 0.59 & 853.8    & 8.01     \\
& & 0.3 & &
1740.51(0)   & 1   & 3.8 & & 5.5   & $2.98E7^{4}$ & $1555.90^{4}(1.4)$ & & 0.58 & 5629.8   & 19       \\
& & 0.4 & &
2062.39(0)   & 1   & 3.16 & & 3.18  & $4.17E7^{5}$         & $1715.11^{5}(1.2)$          & & 0.33 & 3.40E4 & 34.34    \\
& & 0.5 & & 
2237.68(0)   & 1   & 3.72 & & 1.8   & $4.31E7^{3}$ & $1630.78^{3}(0.6)$ & & 0.27 & 4.48E4 & 38.31    \\
\hline
\multirow{5}{*}{$0.4$} & & 0.1 & &
547.32(0.01) & 0.4 & 2.64 & & 23.4  & 1.10E6  & 179.06     & & 1.86 & 527.2    & 6.42     \\
& & 0.2 & &
1188.23(0)   & 0.6 & 3.18 & & 14.41 & $1.84E7^{4}$  & $1414.86^{4}(2.6)$ & & 1.45 & 3.56E3 & 21.13    \\
& & 0.3 & &
1766(0)      & 1   & 4.97 & & 9.3   & $2.78E7^{5}$         & $1800.36^{5}(3.8)$          & & 1.24 & 3.33E4 & 35.67    \\
& & 0.4 & &
2215.48(0)   & 0.8 & 7.24 & & 5.77  & $3.46E7^{5}$         & $1800.26^{5}(2.6)$          & & 0.84 & 5.55E4 & 48.21    \\
& & 0.5 & &
2492.49(0)   & 1   & 8.9  & & 3.4   & $3.90E7^{5}$         & $1800.58^{5}(1.2)$          & & 0.55 & 4.98E4 & 48.94    \\
\hline
\multirow{5}{*}{$0.5$} & & 0.1 & &
483.62(0)    & 0.2 & 2.42 & & 26.27 & $3.94E6^{1}$  & $662.67^{1}(1.2)$ & & 2.71 & 1.78E3 & 18.24    \\
& & 0.2 & & 
1096.05(0)   & 1   & 5.86 & & 18.17 & $1.85E7^{5}$         & $1800.36^{5}(6.8)$          & & 2.72 & 2.36E4 & 42.9     \\
& & 0.3 & &
1728.40(0)   & 0.4 & 5.8 & & 12.07 & $2.50E7^{5}$         & $1800.41^{5}(5.4)$          & & 2.7  & 1.74E4 & 1.02E3 \\
& & 0.4 & &
2261.46(0)   & 0.8 & 8.17 & & 7.75  & $3.67E7^{5}$         & $1800.61^{5}(3.8)$          & & 1.55 & 4.10E5 & 191.38   \\
& & 0.5 & & 
2600.52(0)   & 0.4 & 9.55 & & 4.58  & $3.60E7^{5}$         & $1800.2^{5}(1.8)$          & & 0.93 & 4.32E5 & 160.87  \\
\hline \hline
\multicolumn{15}{l}{1.\ A super script $^{i}$ indicates that $i$ out of five instances hit the time limit.} \\
\multicolumn{15}{l}{2.\ For instances reaching the time limit, the average best optimality gap is reported in the parenthesis following the solution time.} \\
\multicolumn{15}{l}{3.\ For MILO, the gap after Gurobi's presolve is recorded in parentheses.}
\end{tabular}}
	\end{table}
	\end{landscape}

}

\end{document}